\documentclass[sn-mathphys]{sn-jnl}

\usepackage{anyfontsize}
\usepackage{amssymb}
\usepackage{cmap}
\usepackage{hyperxmp}
\usepackage{mathtools}
\usepackage{tikz-cd}
\usepackage{program}
\mathtoolsset{showonlyrefs}
\usepackage{enumerate}
\usepackage{todonotes}

\usepackage{lineno}

\jyear{2021}%

\theoremstyle{thmstyleone}%
\newtheorem{thm}{Theorem}[section]
\newtheorem{cor}[thm]{Corollary}
\newtheorem{prop}[thm]{Proposition}%
\newtheorem{lem}[thm]{Lemma}

\theoremstyle{thmstyletwo}%
\newtheorem{rem}{Remark}%

\theoremstyle{thmstylethree}%

\newcommand{\id}{\mathrm{id}}           

\newcommand{\norm}[1]{\left\Vert#1\right\Vert}          

\let\on=\operatorname

\def\img{\on{im}}

\begin{document}

\title[]{A vector field induced de Rham-Hodge theory on manifolds}


\author[1]{\fnm{Zhe} \sur{Su}}
\email{zhs0011@auburn.edu}

\affil[1]{\orgdiv{Department of Mathematics and Statistics}, \orgname{Auburn University}, \orgaddress{\street{221 Roosevelt Concourse}, \city{Auburn}, \postcode{36849}, \state{CA}, \country{USA}}}


\abstract{
    We introduce a de Rham-Hodge framework induced by a vector field on a compact, oriented smooth manifold. Using a vector field induced bundle isomorphism on differential forms, we define a vector field induced Hodge $L^2$-inner product, codifferential, and Hodge Laplacian. Unlike classical deformations, such as the drifting and Witten-type Hodge Laplacians, the induced Laplacian modifies the principal symbol and gives rise to an anisotropic Laplace-Beltrami type operator on functions.
    We establish the resulting de Rham-Hodge theory for closed manifolds, including the ellipticity of the induced Hodge Laplacian and the corresponding Hodge decomposition and isomorphism results. We further extend the framework to manifolds with boundary by imposing certain vector field induced boundary conditions, which are necessary to restore the adjointness between the differential and induced codifferential, and to obtain a well-posed boundary value problem. Under these boundary conditions, we establish analogues of the Hodge–Morrey and Friedrichs decompositions. We also discuss several structural properties of the framework, including its relation to anisotropic Laplace–Beltrami operators, its spectral behavior in several explicit examples, and its invariance under isometries.
}

 \keywords{de Rham-Hodge theory; Hodge Laplacian; manifolds with boundary; anisotropic Laplace–Beltrami operators}

 \pacs[MSC Classification]{58A14, 58J32}

\maketitle



    




\section{Introduction}

The de Rham-Hodge theory is built on the de Rham complex of differential forms, where the nilpotency $dd=0$ leads to the de Rham cohomology. For closed manifolds, the theory connects the de Rham cohomology classes of a manifold with the harmonic forms, which is established through the Hodge decomposition following from the ellipticity of the Hodge Laplacian \cite{hodge1989theory}. For manifolds with boundary, this identification also holds by imposing appropriate boundary conditions, such as the normal (Dirichlet) and tangential (Neumann) boundary conditions \cite{shonkwiler2009poincare}. Under these boundary conditions, the Hodge Laplacian then gives rise to elliptic boundary value problems, which lead to the Hodge-Morrey decomposition \cite{morrey1956variational}, Friedrichs decomposition \cite{friedrichs1955differential}, and related results. In particular, one has the identifications of the normal harmonic forms with the relative de Rham cohomology and of the tangential harmonic forms with the absolute de Rham cohomology, which captures the topology of the underlying manifold and its boundary. For a detailed treatment of the theory, we refer to Warner's book~\cite{warner1983foundations} for closed manifolds and Schwartz's book~\cite{schwarz2006hodge} for manifolds with boundary.


Beyond the underlying manifold itself, the de Rham-Hodge theory has also been extended to manifolds equipped with additional structures. For weighted manifolds, one can incorporate a function through a weighted measure on the manifold and a modified codifferential, which gives rise to the drifting Hodge Laplacian and the related Hodge decomposition and Hodge isomorphism results. The corresponding theory was established by Bueler in \cite{bueler1999heat} for non-compact manifolds, with results remaining valid for compact manifolds. The drifting Laplacian on functions goes back to Bakry-Emery's earlier work \cite{bakry2006diffusions} in the study of the diffusion processes on manifolds. The associated operators have also found applications in research on Ricci flow \cite{perelman2002entropy,lott2003some} and spectral analysis \cite{branding2024eigenvalue}. 

Another important extension leads to the Witten-Hodge theory, which was introduced by Witten in the context of supersymmetric quantum mechanics and Morse theory \cite{witten1982supersymmetry} and is closely related to the topological quantum field theory \cite{atiyah1988topological}. In this framework, both the differential and the codifferential are modified to keep the symmetry. The formulation can be given by incorporating either a function or a killing vector field on the manifold. For manifolds with boundary, the corresponding Witten Hodge theory and equivariant cohomology have been studied by Al-Zamil and Montaldi in~\cite{al2012witten}. Closely related to this is Morse-Novikov cohomology \cite{novikov1982hamiltonian}, which can be obtained by replacing the exact one-form in the Witten deformation by a closed one-form. The theory was first studied by Lichnerowicz~\cite{lichnerowicz1977varietes} in the context of Poisson geometry, and has been used to study the local conformal symplectic manifolds~\cite{haller1999group,de2003computation}.

Recent developments in topological data analysis have further stimulated interest in extensions of de Rham-Hodge theory, including the persistent Hodge Laplacians~\cite{chen2021evolutionary, su2025topological}. These developments highlight the usefulness of adapting Hodge-theoretic tools to incorporate additional geometric structures. In many applications, the underlying data may exhibit directional information, which is not directly captured by the standard de Rham-Hodge framework. Existing deformations, such as Witten and Morse-Novikov theories, however, are primarily based on vector fields (or one-forms) under additional constraints, requiring them to be killing or closed. 

In this paper, we introduce a de Rham-Hodge framework induced by a general smooth vector field $v$ on the manifold, without imposing additional constraints. More precisely, we introduce a $v$-induced bundle isomorphism
\begin{align}
    T_v = \id + v^\flat\wedge\iota_v,
\end{align}
which preserves the degree of differential forms. This operator induces a modified inner product on differential forms, and gives rise to the corresponding Hodge star operator, codifferential and Hodge Laplacian. In contrast to the drifing and Witten-type Hodge Laplacians, which differ from the standard Hodge Laplacian only by lower-order terms, the $v$-induced Hodge Laplacian modifies the principal symbol and gives rise to anisotropic diffusion operators. In particular, the resulting Laplacian on functions belongs to the class of anisotropic Laplace-Beltrami type operators, which have appeared in the study of anisotropic geometric processing and diffusion problems \cite{clarenz2000anisotropic, bajaj2003anisotropic}.
 
 We develop the resulting de Rham-Hodge theory using this construction. On closed manifolds, we prove the ellipticity of the induced Hodge Laplacian, provide the corresponding Hodge decomposition, and identify the space of the induced harmonic forms with the de Rham cohomology. For manifolds with boundary, the presence of the vector field requires modified boundary conditions, which are necessary to restore the adjointness between the differential and the modified codifferential, and to obtain well-posed elliptic boundary value problems. Under these boundary conditions, we establish the analogues of Hodge-Morrey and Friedrichs decompositions, as well as the corresponding Hodge isomorphism theorems. We also discuss several structural properties of the v-induced framework, including its relation to the Laplace-Beltrami operators and several explicit examples to illustrate the spectral effects of the vector field, a condition under which a harmonic vector field remains $v$-harmonic in its own induced framework, and its invariance under isometries.

\section{$v$-induced $L^2$-inner product and differential operators}

In this section, we first recall the differential operators and the Hodge $L^2$-inner product in the standard de Rham-Hodge framework. We then introduce a vector field induced isomorphism on differential $k$-forms, and use it to construct the corresponding vector field induced differential operators and inner product. We will also discuss several properties of these operators that will be needed later to build the vector field induced de Rham-Hodge framework.

Let $M$ be a compact, oriented smooth manifold (with or without boundary). Denote by $\Omega^k(M)=\Gamma(\wedge^kT^*M)$ the space of differential $k$-forms on $M$. Each $k$-form on $M$ is a smooth section of the $k$-th exterior power of the cotangent bundle of $M$, namely a smooth antisymmetric covariant tensor field on $M$ of degree $k$. The differential $d$, also called the exterior derivative, maps $k$-forms to $(k\!+\!1)$-forms. It satisfies the Leibniz rule with respect to the wedge product and has the property $dd=0$. A $k$-form $\omega$ is called closed if $d\omega = 0$, and exact if there is another form $\eta\in\Omega^{k-1}(M)$ such that $\omega = d\eta$.

Let $g$ be a Riemannian metric on $M$. It then induces a metric $g^{-1}$ on the cotangent bundle $T^*M$, and thus a pointwise inner product on $\Omega^k(M)$ as follows: given $\omega = \omega_1\wedge\ldots\wedge\omega_k$ and $\eta = \eta_1\wedge\ldots\wedge\eta_k$, we have
\begin{align}\label{eq:pointwise_metric_g}
    \langle\omega, \eta\rangle_g = \det\left( g^{-1}(\omega_i, \eta_j) \right).
\end{align}
The Hodge star $\star$ provides an isomorphism between the spaces of $k$-forms and $(n\!-\!k)$-forms, and it is defined by the following formula
\begin{align}
    \omega\wedge\star\eta = \langle\omega, \eta\rangle_g\mu_g,
\end{align}
where $\mu_g$ is the volume form on $M$ induced by $g$. The Hodge $L^2$-inner product is then given by taking the integral of the formula above
\begin{align}\label{eq:hodgeL2}
    (\omega, \eta) = \int_M\langle\omega, \eta\rangle_g\mu_g = \int_M\omega\wedge\star\eta.
\end{align}
The codifferential $\delta$ is defined as 
\begin{align}
    \delta = (-1)^{k}\star^{-1} d \star,
\end{align}
and it also satisfies $\delta\delta = 0$. A $k$-form $\omega$ is coclosed if $\delta\omega=0$, and coexact if there is $\eta\in\Omega^{k+1}(M)$ such that $\delta\eta = \omega$. 
When $M$ has no boundary, the codifferential $\delta$ is the adjoint operator of $d$ with respect to the inner product \eqref{eq:hodgeL2}, i.e., $(d\omega, \eta) = (\omega, \delta\eta)$. However, this is no longer the case when there is a boundary due to the boundary term. 

The Hodge Laplacian is defined by $\Delta = d\delta + \delta d$. We denote by $H^k_{\Delta}(M)$ the kernel of its restriction on $k$-forms, which is called the space of harmonic $k$-forms. The following space
\begin{align}
    \mathcal{H}^k(M) = \{\omega\in\Omega^k(M)\,\vert\, d\omega = 0, \delta\omega =0\}
\end{align}
is known as the space of harmonic fields \cite{kodaira1949harmonic}, which consists of the closed and coclosed $k$-forms. It agrees with the space of harmonic forms $\mathcal{H}_{\Delta}(M)$ when $M$ has no boundary. In the presence of a boundary, this identification no longer holds. The space $\mathcal{H}^k(M)$ is only a subset of $\mathcal{H}_{\Delta}(M)$, i.e., $\mathcal{H}^k(M)\subset \mathcal{H}_{\Delta}(M)$ \cite{schwarz2006hodge}.

\subsection{$v$-induced isomorphism on $k$-forms}

Let $v\in\mathcal{X}(M)$ be a vector field. It then induces an interior product $\iota_v:\Omega^k\to\Omega^{k-1}$ as follows: given a $k$-form $\omega$, $\iota_v(\omega)$ is obtained by inserting $v$ into the first slot, i.e.,
\begin{align}
    \iota_v(\omega)(v_1, \ldots, v_{k-1}) = \omega(v, v_1,\ldots,v_{k-1}),
\end{align}
where each $v_i,\, i=1,\ldots,k\!-\!1$ is also a vector field on $M$. The interior product operator satisfies $\iota_v\iota_v = 0$. 

Denote by $v^\flat = g(v,\cdot)$ the corresponding $1$-form of $v$ in $\Omega^1(M)$ by lowering its index. Notice that the interior product lowers the degree of a form by one, while the wedge product raises it by one. We consider the following map:
\begin{align}
T_v = \on{id} + v^{\flat}\wedge \iota_v:\,\Omega^k(M)\to\Omega^k(M),
\end{align}
where $\on{id}$ is the identity operator, and $\iota_vf = 0$ for $f\in\Omega^0(M)$ by convension. It is clear that $T_v$ preserves the degrees. In the following, we show that it is linear, positive definite, and self-adjoint with respect to the pointwise inner product~\eqref{eq:pointwise_metric_g}. Moreover, $T_v$ is an isomorphism. 

We first need the following lemma and corollary.

\begin{lem}\label{lem.wedge.interiorprod.adjoint}
    Let $\zeta\in\Omega^{k-1}(M)$ and $\eta\in\Omega^k(M)$. Then
    \begin{align}
        \langle v^\flat\wedge\zeta, \eta\rangle_g = \langle\zeta, \iota_v(\eta)\rangle_g.
    \end{align}
\end{lem}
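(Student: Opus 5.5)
The identity is pointwise and bilinear in $\zeta$ and $\eta$, so I will work at a single point $p\in M$ and check it on decomposable forms. Since both sides are linear in $\zeta$ and in $\eta$ (over $C^\infty(M)$, hence pointwise over $\mathbb{R}$), it suffices to take $\zeta=\zeta_1\wedge\cdots\wedge\zeta_{k-1}$ and $\eta=\eta_1\wedge\cdots\wedge\eta_k$ with $\zeta_i,\eta_j\in T_p^*M$. The case $k=1$ is handled separately: then $\zeta=f$ is a function, $\langle v^\flat f,\eta\rangle_g=f\,g^{-1}(v^\flat,\eta)=f\,\eta(v)=\langle f,\iota_v\eta\rangle_g$, using $g^{-1}(v^\flat,\eta)=\eta(v)$ by definition of $v^\flat$.

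For $k\ge2$, I would compute both sides from \eqref{eq:pointwise_metric_g}. On the left, $v^\flat\wedge\zeta=v^\flat\wedge\zeta_1\wedge\cdots\wedge\zeta_{k-1}$, so $\langle v^\flat\wedge\zeta,\eta\rangle_g$ is the determinant of the $k\times k$ matrix whose first row is $\bigl(g^{-1}(v^\flat,\eta_j)\bigr)_j=\bigl(\eta_j(v)\bigr)_j$ and whose remaining rows are $\bigl(g^{-1}(\zeta_i,\eta_j)\bigr)_j$. I then expand this determinant along the first row (Laplace expansion):
\begin{align}
\langle v^\flat\wedge\zeta,\eta\rangle_g=\sum_{j=1}^k(-1)^{j+1}\eta_j(v)\,\det\bigl(g^{-1}(\zeta_i,\eta_l)\bigr)_{i,\,l\neq j}.
\end{align}
On the right, the standard formula for the interior product on a wedge of $1$-forms gives $\iota_v(\eta_1\wedge\cdots\wedge\eta_k)=\sum_{j}(-1)^{j+1}\eta_j(v)\,\eta_1\wedge\cdots\wedge\widehat{\eta_j}\wedge\cdots\wedge\eta_k$, which follows from the convention that $v$ is inserted into the first slot and the determinant formula for evaluating a wedge of covectors. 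By linearity of $\langle\zeta,\cdot\rangle_g$ and \eqref{eq:pointwise_metric_g} applied to each $(k-1)$-form term, the right side equals exactly the same sum, and the two sides agree.

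The only point needing care is the sign and normalization conventions: that the wedge/evaluation convention implicit in \eqref{eq:pointwise_metric_g} (determinant without factorial factors) matches the one giving $\iota_v(\eta_1\wedge\cdots\wedge\eta_k)$ with signs $(-1)^{j+1}$ and no extra constants. I would fix the determinant convention $(\eta_1\wedge\cdots\wedge\eta_k)(v_1,\dots,v_k)=\det(\eta_j(v_i))$, consistent with \eqref{eq:pointwise_metric_g} being the induced metric in which a coframe wedge is orthonormal, and derive the interior product formula by expanding that determinant along the row corresponding to $v_1=v$. With this consistent choice, the Laplace expansions on both sides coincide term by term, which completes the argument.
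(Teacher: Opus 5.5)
Your proof is correct and is essentially the paper's argument: expand the determinant for $\langle v^\flat\wedge\zeta,\eta\rangle_g$ along the first row, whose entries are $\eta_j(v)$, and recognize the resulting sum as $\langle\zeta,\iota_v\eta\rangle_g$ through the expansion of $\iota_v(\eta_1\wedge\cdots\wedge\eta_k)$. Your explicit reduction to decomposable forms, your separate treatment of $k=1$, and your check of the determinant (non-factorial) convention make precise points that the paper leaves implicit.
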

\begin{proof}
We define a $k$-form $\bar{\zeta} = \zeta_1\wedge\zeta_2\wedge\ldots\wedge\zeta_k$, where $\zeta_1 = v^\flat$ and $\zeta = \zeta_2\wedge\ldots\wedge\zeta_k$. By direct computation, we obtain
\begin{align}
        \langle v^\flat\wedge\zeta, \eta\rangle_g = \langle \bar\zeta, \eta\rangle_g &= \det\left(g^{-1}(\zeta_i, \eta_j) \right).
    \end{align}
Note that the entries in the first row of this determinant are given by $g^{-1}(\zeta_1, \eta_j) = g^{-1}(v^\flat, \eta_j) = \eta_j(v)$. We expand it along the first row and obtain
\begin{align}
    \det\left(g^{-1}(\zeta_i, \eta_j) \right) &=\sum_{j=1}^k(-1)^{j-1}\eta_j(v)\langle\zeta_2\wedge\ldots\wedge\zeta_k, \eta_1\wedge\ldots\wedge\hat{\eta_j}\wedge\ldots\wedge\eta_k\rangle_g\\
    &=\sum_{j=1}^k(-1)^{j-1}\eta_j(v)\langle\zeta, \eta_1\wedge\ldots\wedge\hat{\eta_j}\wedge\ldots\wedge\eta_k\rangle_g\\
    &=\langle\zeta, \sum_{j=1}^k(-1)^{j-1}\eta_j(v)\eta_1\wedge\ldots\wedge\hat{\eta_j}\wedge\ldots\wedge\eta_k\rangle_g\\
    &=\langle\zeta, \iota_v(\eta)\rangle_g.
\end{align}
This proves the identity.
\end{proof}
\begin{cor}\label{cor.innermul}
    Let $\omega, \eta\in\Omega^{k}(M)$. Then $\langle v^\flat\wedge \iota_v(\omega), \eta\rangle_g = \langle \iota_v(\omega), \iota_v(\eta)\rangle_g.$
\end{cor}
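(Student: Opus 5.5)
This follows directly from Lemma~\ref{lem.wedge.interiorprod.adjoint}, applied with a particular choice of the $(k-1)$-form. Given $\omega\in\Omega^k(M)$, set $\zeta = \iota_v(\omega)\in\Omega^{k-1}(M)$. The lemma then gives
\begin{align}
    \langle v^\flat\wedge\iota_v(\omega), \eta\rangle_g = \langle v^\flat\wedge\zeta, \eta\rangle_g = \langle \zeta, \iota_v(\eta)\rangle_g = \langle\iota_v(\omega), \iota_v(\eta)\rangle_g,
\end{align}
which is exactly the claimed identity.

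The one point to check is that the lemma applies to an arbitrary $\zeta\in\Omega^{k-1}(M)$, not only to decomposable ones. Its proof was written for $\zeta=\zeta_2\wedge\cdots\wedge\zeta_k$ and $\eta$ decomposable, but both sides of the lemma are bilinear in $(\zeta,\eta)$ at each point. Every form is locally a finite sum of decomposable forms, for instance in a local coframe. So the decomposable case extends by linearity, and this holds pointwise, which is all that is needed.

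I will also record the edge case $k=0$. There $\iota_v\omega=0$ by convention, so both sides vanish trivially. For $k\ge 1$ the argument above applies verbatim.

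There is no real obstacle here. The only subtlety is the linearity extension just described, and it is routine.
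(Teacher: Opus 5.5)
Your proposal is correct and matches the paper, which gives no separate proof and treats the corollary as the immediate specialization $\zeta=\iota_v(\omega)$ of Lemma~\ref{lem.wedge.interiorprod.adjoint}. Your remarks on extending the lemma from decomposable forms by pointwise bilinearity and on the trivial case $k=0$ are valid clarifications that the paper leaves implicit.
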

We immediately have the following proposition.
\begin{prop}
    Let $\omega, \eta\in\Omega^{k}(M)$. Then 
    \begin{align}
        \langle T_v(\omega), \eta\rangle_g = \langle\omega, \eta\rangle_g + \langle \iota_v(\omega), \iota_v(\eta)\rangle_g.
    \end{align}
    Consequently, $T_v$ is positive definite and self-adjoint with respect to the pointwise metric~\eqref{eq:pointwise_metric_g} on $\Omega^k(M)$, i.e.,
    \begin{align}\label{eq:Tv.selfadjoint}
        \langle T_v(\omega), \eta\rangle_g = \langle \omega, T_v(\eta)\rangle_g.
    \end{align}
\end{prop}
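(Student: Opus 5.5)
The identity follows by expanding the definition of $T_v$ and applying Corollary~\ref{cor.innermul}; self-adjointness and positive definiteness then come from the symmetry and nonnegativity of the result.

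\emph{Step 1: the identity.} Since $T_v = \id + v^\flat\wedge\iota_v$ and the pointwise inner product $\langle\cdot,\cdot\rangle_g$ is bilinear, we have
\begin{align}
\langle T_v(\omega),\eta\rangle_g = \langle\omega,\eta\rangle_g + \langle v^\flat\wedge\iota_v(\omega),\eta\rangle_g .
\end{align}
Corollary~\ref{cor.innermul}, which is Lemma~\ref{lem.wedge.interiorprod.adjoint} applied with $\zeta=\iota_v(\omega)\in\Omega^{k-1}(M)$, rewrites the second term as $\langle\iota_v(\omega),\iota_v(\eta)\rangle_g$. This gives the displayed formula.

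For $k=0$ we use the convention $\iota_v f=0$. Then $T_v=\id$, and the extra term vanishes on both sides, so the formula holds trivially. Linearity of $T_v$ is clear, because $\iota_v$ and $v^\flat\wedge\cdot$ are both $C^\infty(M)$-linear.

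\emph{Step 2: self-adjointness.} The right-hand side of the identity is symmetric in $\omega$ and $\eta$, since $\langle\cdot,\cdot\rangle_g$ is symmetric on both $\Omega^k(M)$ and $\Omega^{k-1}(M)$. Hence
\begin{align}
\langle T_v(\omega),\eta\rangle_g = \langle T_v(\eta),\omega\rangle_g = \langle\omega,T_v(\eta)\rangle_g,
\end{align}
which is \eqref{eq:Tv.selfadjoint}.

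\emph{Step 3: positive definiteness.} Set $\eta=\omega$ to get
\begin{align}
\langle T_v(\omega),\omega\rangle_g = |\omega|_g^2 + |\iota_v\omega|_g^2 \ge |\omega|_g^2 .
\end{align}
This is strictly positive at every point where $\omega\neq 0$, because the metric \eqref{eq:pointwise_metric_g} is positive definite on each fiber $\wedge^kT^*_pM$.

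\emph{Main obstacle.} The only point needing care is that positivity of $\langle\cdot,\cdot\rangle_g$ on $k$-forms has to be understood fiberwise. Formula \eqref{eq:pointwise_metric_g} is stated only for decomposable forms and is extended bilinearly. Its positive definiteness follows by taking a $g$-orthonormal coframe at a point: the wedge products $e^{i_1}\wedge\cdots\wedge e^{i_k}$ with $i_1<\cdots<i_k$ then form an orthonormal basis, because their Gram determinants are $0$ or $1$. I would state this briefly and not reprove it.
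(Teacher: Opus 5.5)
Your proof is correct and follows the paper's argument: expand $T_v=\id+v^\flat\wedge\iota_v$, convert the second term with Corollary~\ref{cor.innermul}, read off self-adjointness from the symmetry of the right-hand side, and set $\eta=\omega$. Your lower bound $\langle T_v\omega,\omega\rangle_g\ge|\omega|_g^2$ is slightly sharper than the paper's written ``$\ge 0$'', which on its own would only give semidefiniteness, so it is the right way to conclude definiteness.
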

\begin{proof}
    By Corollary~\ref{cor.innermul}, we compute
    \begin{align}
        \langle T_v(\omega), \eta\rangle_g &= \langle \omega + v^\flat\wedge \iota_v(\omega), \eta\rangle_g = \langle\omega, \eta\rangle_g + \langle v^\flat\wedge \iota_v(\omega), \eta\rangle_g\\
        &= \langle\omega, \eta\rangle_g + \langle \iota_v(\omega), \iota_v(\eta)\rangle_g.
    \end{align}
    The self-adjointness of $T_v$ follows immediately. Replacing $\eta$ with $\omega$, we obtain
    \begin{align}
        \langle T_v(\omega), \omega\rangle_g = \norm{\omega}_g^2 + \norm{\iota_v(\omega)}\geq 0,
    \end{align}
    which shows the positive definiteness. 
\end{proof}
We next establish a formula for the inverse of $T_v$, and thus $T_v$ is an isomorphism.
\begin{prop}
The inverse of $T_v$ is given by
    \begin{align}
        T_v^{-1} = \on{id} - \frac{1}{1+\norm{v}^2_g}v^\flat\wedge \iota_v.
    \end{align}
    As a result, $T_v:\Omega^k(M)\to\Omega^k(M)$ is an isomorphism.
\end{prop}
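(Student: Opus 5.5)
Write $P = v^\flat\wedge\iota_v$, so that $T_v=\on{id}+P$. The whole argument rests on one algebraic identity:
\begin{align}
    P^2 = \norm{v}_g^2\,P .
\end{align}
Once this holds, a direct multiplication gives
\begin{align}
    (\on{id}+P)\Bigl(\on{id}-\tfrac{1}{1+\norm{v}_g^2}P\Bigr) = \on{id} + P - \tfrac{1}{1+\norm{v}_g^2}\bigl(P+\norm{v}_g^2P\bigr) = \on{id}.
\end{align}
The same computation in the other order shows that the candidate is also a left inverse, since both factors are polynomials in $P$ and therefore commute.

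To prove $P^2=\norm{v}_g^2P$, I would use that $\iota_v$ is an antiderivation of degree $-1$. Applied to the wedge product $v^\flat\wedge\iota_v\omega$, it gives
\begin{align}
    \iota_v\bigl(v^\flat\wedge\iota_v\omega\bigr) = v^\flat(v)\,\iota_v\omega - v^\flat\wedge\iota_v\iota_v\omega = \norm{v}_g^2\,\iota_v\omega,
\end{align}
where the second term drops out because $\iota_v\iota_v=0$, as recorded earlier. Wedging this identity with $v^\flat$ on the left yields $P^2\omega=\norm{v}_g^2P\omega$. For $k=0$ we have $P=0$, so $T_v=\on{id}$ and the claim is trivial.

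The remaining points are routine. First, $1+\norm{v}_g^2\ge 1>0$ pointwise, so the coefficient $\frac{1}{1+\norm{v}_g^2}$ is a smooth function on $M$. Second, $P$ is $C^\infty(M)$-linear, so multiplying by this function commutes with $P$ and the formula defines a smooth bundle map. Hence $T_v$ has a two-sided inverse on $\Omega^k(M)$ and is an isomorphism.

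The only step requiring care is the sign in the antiderivation rule. I would confirm it by the convention that the interior product inserts $v$ in the first slot, which gives $\iota_v(\alpha\wedge\beta)=\iota_v\alpha\wedge\beta+(-1)^{\deg\alpha}\alpha\wedge\iota_v\beta$ with $\alpha=v^\flat$ of degree one. As an alternative check, one could verify that the candidate is a right inverse on decomposable forms in an orthonormal frame whose first vector is $v/\norm{v}_g$ (where $v\neq0$). In that frame $T_v$ scales by $1+\norm{v}_g^2$ exactly those basis forms containing the first covector and fixes the others, which makes the inverse formula evident.
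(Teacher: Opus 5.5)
Your proof is correct and follows the paper's argument. Both derive $\iota_v(v^\flat\wedge\iota_v)=\norm{v}_g^2\,\iota_v$ from the antiderivation rule and $\iota_v\iota_v=0$, which gives $P^2=\norm{v}_g^2P$, and then multiply out to obtain the identity; your remark that the two factors commute because $P$ is $C^\infty(M)$-linear settles the other composition order, which the paper only calls analogous.
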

\begin{proof}
By direct computation, we have
\begin{align}
    \iota_v(v^\flat\wedge \iota_v) &= \iota_v(v^\flat)\iota_v + (-1)v^\flat\wedge \iota_v\iota_v\\
    &= \norm{v}^2_g\iota_v + (-1)v^\flat\wedge \iota_v\iota_v = \norm{v}^2_g\iota_v.
\end{align}
Thus
    \begin{align}
        \left(T_v\right)^{-1} T_v &= \left(\on{id} - \frac{1}{1+\norm{v}^2_g}v^\flat\wedge \iota_v\right)\left(\on{id} + v^\flat\wedge \iota_v\right)\\
        &= \on{id} - \frac{1}{1+\norm{v}^2_g}v^\flat\wedge \iota_v + v^\flat\wedge \iota_v - \frac{1}{1+\norm{v}^2_g}v^\flat\wedge \iota_v(v^\flat\wedge \iota_v)\\
        &= \on{id} - \frac{1}{1+\norm{v}^2_g}v^\flat\wedge \iota_v + v^\flat\wedge \iota_v - \frac{\norm{v}^2_g}{1+\norm{v}^2_g}v^\flat\wedge \iota_v = \on{id}.
    \end{align}
One can verify that $T_v\left(T_v\right)^{-1}=\on{id}$ in an analogous way. Since $T_v$ is invertible, it is an isomorphism.
\end{proof}

\subsection{$v$-induced $L^2$-inner product and operators}\label{sec:innerProd_and_operators}

Using the isomorphism $T_v$, we now introduce the corresponding $v$-induced Hodge structure on differential forms. The operator $T_v$ induces a pointwise inner product on $k$-forms by
\begin{align}\label{eq:pointwise_metric_g.vinduced}
    \langle\omega, \eta\rangle_{g, v} = \langle T_v(\omega), \eta\rangle_{g}.
\end{align}
Integrating over $M$, we obtain the corresponding $v$-induced Hodge $L^2$-inner product, given by
\begin{align}\label{eq:HodgeL2.v.induced}
    (\omega, \eta)_v = \int_M\langle\omega, \eta\rangle_{g, v}\mu_g = \int_M\langle T_v(\omega), \eta\rangle_gd\mu_g.
\end{align}
We denote the $v$-induced norm as $\norm{\cdot}_v$. This inner product can also be reformulated by introducing a $v$-induced Hodge star operator, given by $\star_v = \star T_v:\Omega^k(M)\to\Omega^k(M)$, which is an isomorphism since the Hodge star $\star$ and $T_v$ are both isomorphisms. By the self-adjointness of $T_v$, we have
\begin{align}
    (\omega, \eta)_v = \int_M \omega\wedge\star T_v\eta = \int_M \omega\wedge\star_v\eta.
\end{align} 
The $v$-induced Hodge star naturally leads to a $v$-induced codifferential by replacing the standard Hodge star operator in the definition of the codifferential $\delta$. More precisely, we define
\begin{align}\label{eq:def.delta_v}
    \delta_v^k := (-1)^{k}\star_v^{-1}\,d\,\star_v.
\end{align}
By direct computation, we can check that $\delta_v = T_v^{-1}\delta T_v$. In particular, $\delta_v\delta_v = 0$.
We call a $k$-form $\omega$ $v$-coclosed if $\delta_v(\omega) = 0$ and $v$-coexact if there is $\eta\in\Omega^{k+1}(M)$ such that $\omega = \delta_v(\eta)$. When $v = 0$, $T_v = \on{id}$, and then the $v$-induced codifferential $\delta_v$ reduces to the usual codifferential $\delta$. Note that in the standard case, one can also define the codifferential using the formula $\delta = (-1)^{n(n-k)+1}\star d\star$. However, in our $v$-induced framework, the formula~\eqref{eq:def.delta_v} is not equivalent to $(-1)^{n(n-k)+1}\star_v d\star_v$, since $\star_v^{-1}$ and $\star_v$ do not differ merely by a power of the sign, see Lemma~\ref{lem:starTv}.

The differential $d$ and the $v$-induced codifferential $\delta_v$ naturally define a vector field induced Hodge Laplacian, which we call the $v$-induced Hodge Laplacian, given by
\begin{align}\label{eq:laplacian.vinduced}
    \Delta_v := d\delta_v + \delta_vd,
\end{align}
which preserves the degree of differential forms. We denote by
\begin{align}
    \mathcal{H}_{\Delta, v}^k(M) = \{\omega\in\Omega^k(M)\,\vert\, \Delta_v\omega = 0\}
\end{align}
its kernel on $k$-forms, and call elements in this kernel the $v$-harmonic $k$-forms.
In addition, we introduce the space
\begin{align}
    \mathcal{H}^k_v(M) = \{\omega\in\Omega^k(M)\,\vert\, d\omega = 0, \delta_v\omega =0\}
\end{align}
consisting of the closed and $v$-coclosed $k$-forms. We call this space $\mathcal{H}^k_v(M)$ the space of $v$-harmonic fields.

In analogy to the standard theory, a first-order differential operator $D_v = d + \delta_v$, called the $v$-induced Dirac operator, can also be defined. Its square is the $v$-induced Hodge Laplacian $D_v^2 = \Delta_v$. The spectral properties of $\Delta_v$ can also be studied through the corresponding eigenvalues and eigenforms of $D_v$.

\begin{rem}
    One could modify both the differential and codifferential operators in analogy to the Witten-Hodge framework~\cite{witten1982supersymmetry} by defining $d_v = T_vdT_v^{-1}$, which gives rise to a different class of the $v$-induced Hodge Laplacian. However, in this work we focus on deformations induced solely through the codifferential, which produce anisotropic effects at the level of the principal symbol.
\end{rem}

When acting on $0$-forms (i.e., functions), the $v$-induced Hodge Laplacian $\Delta^0_v = \delta_vd = \delta T_v d: C^{\infty}(M)\to C^{\infty}(M)$ belongs to the family of the anisotropic Laplace-Beltrami operators~\cite{clarenz2000anisotropic, bajaj2003anisotropic}. In fact,  in local coordinates, notice that $\delta(\omega_idx^i) = -\frac{1}{\sqrt{G}}\partial_i\left(\sqrt{G}g^{ij}\omega_j\right)$ with $G = \det(g)$. By computation, one immediately obtains the coordinate expression of the Laplacian as follows:
\begin{align}
    \Delta_v^0 \varphi &= -\frac{1}{\sqrt{G}} \partial_i\left(\sqrt{G}T_v^{ij} \partial_j \varphi\right),\quad \varphi\in C^{\infty}(M),
\end{align}
where $(T_v)^{ij}=g^{is}(T_v)^j_s = g^{is}\left(\delta_s^j + v^j v_s\right) = g^{ij} + v^iv^j$ is positive definite symmetric. This direction-dependent diffusion tensor $(T_v)^{ij}$ encodes the anisotropy in the principal symbol of the Laplacian. In the case that $v=0$, $(T_v)^{ij} = g^{ij}$ and then this local formula of $\Delta_v$ on functions reduces to that of the standard Laplace-Beltrami operator on manifolds.



\section{$v$-induced de Rham-Hodge theory for closed manifolds}

In this section, we study the vector field-induced de Rham-Hodge framework on closed manifolds, where $\partial M = \emptyset$. We first establish the adjointness between the differential $d$ and the $v$-induced codifferential $\delta_v$ with respect to the $v$-induced Hodge inner product~\eqref{eq:HodgeL2.v.induced}. We then focus on the basic properties of the $v$-induced Hodge Laplacian, which lead to the $v$-induced Hodge decomposition, and the corresponding Hodge isomorphism that identifies the harmonic forms with the de Rham cohomology.

By the self-adjointness of $T_v$ with respect to the pointwise inner product~\eqref{eq:pointwise_metric_g} and the adjointness between $d$ and $\delta$ with respect to the standard Hodge $L^2$-inner product~\eqref{eq:HodgeL2.v.induced}, one can compute
\begin{align}
    (d\zeta, \eta)_v &= \int_M d\zeta\wedge\star T_v \eta = \int_M\left\langle d\zeta, T_v(\eta)\right\rangle_gd\mu_g\\
    &= (d\zeta, T_v(\eta)) = (\zeta, \delta T_v(\eta)) = \int_M\left\langle \zeta, \delta T_v(\eta)\right\rangle_gd\mu_g\\
    &= \int_M\left\langle T_v^{k-1}(\zeta),\left(T_v^{k-1}\right)^{-1}\delta T_v(\eta)\right\rangle_gd\mu_g\\
    &= \int_M\left\langle T_v^{k-1}(\zeta), \delta_v(\eta)\right\rangle_gd\mu_g =(\zeta, \delta_v\eta)_v.
\end{align}
Therefore, the differential $d$ and the $v$-induced codifferential $\delta_v$ are adjoint with respect to the $v$-induced Hodge inner product~\eqref{eq:HodgeL2.v.induced}.
    


\subsection{Ellipticity of the $v$-induced Hodge Laplacian}

The $v$-induced Hodge Laplacian $\Delta_v$ shares the same basic spectral properties as the standard Hodge Laplacian. Its spectrum consists of nondecreasing discrete real nonnegative eigenvalues with finite multiplicity, and the eigenvectors of different eigenvalues are orthogonal with respect to the $v$-induced $L^2$-inner product. Since $M$ is closed, these results follow directly from the self-adjointness, ellipticity, and positive-semidefiniteness of the $v$-induced Hodge Laplacian.

Using the adjointness between $d$ and $\delta_v$, the following identity can be obtained directly, which implies the self-adjointness of $\Delta_v$:
\begin{align}
(\Delta_v\omega ,\eta)_v = \left(\delta_v\omega, \delta_v\eta\right)_v +  \left(d\omega, d\eta\right)_v = (\omega, \Delta_v\eta)_v.
\end{align}
By taking $\eta=\omega$, the positive semidefiniteness follows from
\begin{align}
(\Delta_v\omega ,\omega)_v = \norm{\delta_v^k\omega}_v^2 + \norm{d^k\omega}_v^2.
\end{align}
In particular, one has $\Delta_v(\omega) = 0$ iff $d\omega = 0$ and $\delta_v(\omega) = 0$. It follows directly that in the case of closed manifolds, the space of harmonic forms agrees with the space of harmonic fields, i.e., $\mathcal{H}^k_{\Delta, v}(M) = \mathcal{H}^k_v(M).$

Next, we show the ellipticity of $\Delta_v$ using its principal symbol directly.
By direct computation, one can see that the difference between the $v$-induced Hodge Laplacian $\Delta_v$ and the standard Hodge Laplacian $\Delta$ contains second-order terms, see for instance the $0$-th order case~\eqref{eq:0thLaplacian2}. Therefore, they do not share the same principal symbol, which is in contrast to the drifting Hodge Laplacian \cite{bueler1999heat} and the Witten Hodge Laplacian \cite{witten1982supersymmetry} that differ from the standard Hodge Laplacian only by lower-order terms. The ellipticity of $\Delta_v$ does not follow immediately from a comparison with the standard Hodge Laplacian $\Delta$. 
\begin{thm}
    The $v$-induced Hodge Laplacian $\Delta_v$ is elliptic.
\end{thm}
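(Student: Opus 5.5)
We compute the principal symbol of $\Delta_v = d\delta_v + \delta_v d$ at a covector $\xi\in T^*_xM\setminus\{0\}$ and show that it is invertible on $\wedge^kT_x^*M$. Since $\delta_v = T_v^{-1}\delta T_v$ and $T_v$, $T_v^{-1}$ are zeroth-order bundle maps, the principal symbol of $\delta_v$ is obtained by conjugating that of $\delta$. With the convention $\sigma(d)(\xi) = i\,\xi\wedge$ and $\sigma(\delta)(\xi) = -i\,\iota_{\xi^\sharp}$, we get
\begin{align}
\sigma(\Delta_v)(\xi) = \xi\wedge T_v^{-1}\iota_{\xi^\sharp}T_v + T_v^{-1}\iota_{\xi^\sharp}T_v\,\xi\wedge .
\end{align}
It is convenient to write $e(\xi) = \xi\wedge$ and $\iota(\xi) = \iota_{\xi^\sharp}$. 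Then $\iota(\xi)$ is the adjoint of $e(\xi)$ with respect to $\langle\cdot,\cdot\rangle_g$; this is Lemma~\ref{lem.wedge.interiorprod.adjoint} with $v$ replaced by $\xi^\sharp$. Hence $A := T_v^{-1}\iota(\xi)T_v$ is the adjoint of $e(\xi)$ with respect to the pointwise $v$-inner product $\langle\cdot,\cdot\rangle_{g,v}=\langle T_v\cdot,\cdot\rangle_g$. Indeed,
\begin{align}
\langle \xi\wedge\alpha,\beta\rangle_{g,v} = \langle \xi\wedge\alpha, T_v\beta\rangle_g = \langle\alpha, \iota(\xi)T_v\beta\rangle_g = \langle T_v\alpha, T_v^{-1}\iota(\xi)T_v\beta\rangle_g = \langle\alpha, A\beta\rangle_{g,v},
\end{align}
where the third equality uses the self-adjointness \eqref{eq:Tv.selfadjoint} of $T_v$.

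The main step, and the only real obstacle, is to avoid computing $\sigma(\Delta_v)(\xi)$ explicitly. The cross terms between $v$ and $\xi$ are messy, and unlike the drifting and Witten cases we cannot simply compare with the symbol of $\Delta$. We therefore argue abstractly. The symbol $S := e(\xi)A + Ae(\xi)$ is self-adjoint and positive semidefinite for the $v$-inner product, since
\begin{align}
\langle S\omega,\omega\rangle_{g,v} = \norm{A\omega}_{g,v}^2 + \norm{\xi\wedge\omega}_{g,v}^2 .
\end{align}
So it suffices to show that $S\omega = 0$ forces $\omega=0$. If $S\omega=0$, then $\xi\wedge\omega = 0$ and $A\omega = 0$; the latter means $\iota(\xi)T_v\omega = 0$. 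From $\xi\wedge\omega=0$ with $\xi\neq0$ we get $\omega = \xi\wedge\theta$ for some $\theta$. Then
\begin{align}
0 = \langle \iota(\xi)T_v\omega,\theta\rangle_g = \langle T_v\omega, \xi\wedge\theta\rangle_g = \langle T_v\omega,\omega\rangle_g,
\end{align}
and positive definiteness of $T_v$ (Proposition above) gives $\omega=0$.

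Thus $\sigma(\Delta_v)(\xi)$ is injective, hence invertible, for every $\xi\neq0$, and $\Delta_v$ is elliptic. As a sanity check, we will note that in degree $0$ the argument reduces to $\sigma(\Delta_v^0)(\xi) = \langle T_v\xi,\xi\rangle_g = |\xi|_g^2 + \xi(v)^2 > 0$, which matches the anisotropic tensor $g^{ij}+v^iv^j$ from the coordinate formula. Along the way we will need to check the sign and factor-of-$i$ conventions in the symbol of $\delta$, but these affect only an overall sign and not invertibility.
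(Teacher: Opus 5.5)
Your proof is correct and follows essentially the same route as the paper: you identify the symbol of $\delta_v$ as the $\langle\cdot,\cdot\rangle_{g,v}$-adjoint of $\xi\wedge$, use $\langle\sigma_{\Delta_v}(\xi)\omega,\omega\rangle_{g,v}=\norm{A\omega}_{g,v}^2+\norm{\xi\wedge\omega}_{g,v}^2$ to get $\xi\wedge\omega=0$ and $A\omega=0$, and then use exactness of $\xi\wedge$ together with positive definiteness of $T_v$ to conclude $\omega=0$. The only difference is that you verify the adjoint relation directly from $\delta_v=T_v^{-1}\delta T_v$ and the self-adjointness of $T_v$, whereas the paper cites the general fact that the symbol of a formal adjoint is minus the adjoint of the symbol.
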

\begin{proof}
To establish the ellipticity of $\Delta_v$, we show that for any $x\in M$ and $\xi \neq0$ in $T^*M$, its principal symbol
\begin{align}
    \sigma_{\Delta_v}(\xi): \wedge^kT_x^*M\to \wedge^kT_x^*M 
\end{align}
is an isomorphism. Since that $\wedge^kT_x^*M$ is finite dimensional, it suffices to verify that $\sigma_{\Delta_v}(\xi)$ is injective.

The differential $d$ is a first-order operator. The principal symbol of $\delta_v$, following from the adjointness between $\delta_v$ and $d$ with respect to \eqref{eq:HodgeL2.v.induced}, is then given by the adjoint of $\sigma_d(\xi)$ with a sign \cite{demailly1996theorie}, i.e.,
$$\sigma_{\delta_v}(\xi) = - \sigma_d(\xi)^*.$$
Thus, the principal symbol of $\Delta_v$ is of the form
\begin{align}
    \sigma_{\Delta_v}(\xi) =\sigma_{d}(\xi)\sigma_d(\xi)^* + \sigma_d(\xi)^*\sigma_{d}(\xi).
\end{align}
By computation, we have
\begin{align}
 \left\langle\left(\sigma_{\Delta_v}(\xi)\right)\omega, \omega\right\rangle_{g, v}  = \langle\sigma_{d}(\xi)\omega, \sigma_{d}(\xi)\omega\rangle_{g, v} + \langle\sigma_d(\xi)^*\omega, \sigma_d(\xi)^*\omega\rangle_{g, v}
\end{align}
Therefore, $\sigma_{\Delta_v}(\xi)\omega = 0$ iff $\sigma_d(\xi)\omega = 0$, and $\sigma_d(\xi)^*\omega = 0$.

Note that the principal symbol of the differential $d$ is given by $\sigma_d(\xi) = \xi\wedge$. The wedge multiplication by $\xi$ defines an exact sequence
\begin{align}
    \wedge^{k-1}T^*_xM\xrightarrow{\sigma_d(\xi)\,} \wedge^{k}T^*_xM\xrightarrow{\,\sigma_d(\xi)\,}\wedge^{k+1}T^*_xM.
\end{align} 
From $\sigma_d(\xi)\omega = 0$ we know there is $\eta$ such that $\omega = \sigma_d(\xi)\eta$. Therefore, 
\begin{align}
    \langle\omega, \omega\rangle_{g, v} = \langle \sigma_d(\xi)\eta, \omega\rangle_{g, v} = \langle\eta, \sigma_d(\xi)^*\omega\rangle_{g, v}= 0
\end{align}
It follows that $\omega = 0$ and thus $\sigma_{\Delta_v}(\xi)$ is injective. This proves the statement.
\end{proof}



\subsection{$v$-Hodge decomposition and de Rham cohomology}

In the following, we give the $v$-induced Hodge decomposition on closed manifolds, and connect the space of $v$-harmonic forms to the de Rham cohomology group.

\begin{thm}\label{thm.hodgedecomposition.closed}
The $v$-harmonic space $\mathcal{H}_{\Delta, v}^k(M)$ is finite dimensional. 
One has the orthogonal decomposition
\begin{align}
    \Omega^k(M) = \Delta_v(\Omega^k(M))\oplus \mathcal{H}_{\Delta,v}^k(M)
\end{align}
with respect to the inner product~\eqref{eq:HodgeL2.v.induced}. Moreover,
   \begin{align}
        \Omega^k(M) = d\Omega^{k-1}(M)\oplus\delta_v\Omega^{k+1}(M)\oplus \mathcal{H}_{\Delta,v}^k(M).
    \end{align}
\end{thm}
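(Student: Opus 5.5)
The plan is to follow the standard proof of the Hodge decomposition on closed manifolds (as in Warner's book), replacing the classical $L^2$-inner product by $(\cdot,\cdot)_v$. The key inputs are the ellipticity of $\Delta_v$ from the previous theorem, its formal self-adjointness and nonnegativity with respect to $(\cdot,\cdot)_v$, and the adjointness of $d$ and $\delta_v$.

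First, note that $(\cdot,\cdot)_v$ is equivalent to the standard $L^2$ inner product. Indeed, $T_v$ is pointwise positive definite and self-adjoint, with eigenvalues between $1$ and $1+\norm{v}_g^2$, and $M$ is compact. Hence the $L^2_v$ and $L^2$ completions coincide, and Sobolev spaces may be defined as usual. Since $\Delta_v$ is a second-order elliptic operator with smooth coefficients, elliptic regularity and the G\aa rding inequality hold for it on the compact manifold $M$:
\begin{align}
\norm{\omega}_{H^{s+2}}\le C\bigl(\norm{\Delta_v\omega}_{H^s}+\norm{\omega}_{L^2}\bigr).
\end{align}
Finite dimensionality of $\mathcal{H}^k_{\Delta,v}(M)$ then follows from the usual argument. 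Suppose the kernel were infinite dimensional. Then it contains an infinite $(\cdot,\cdot)_v$-orthonormal sequence. By the estimate with $s=0$, this sequence is bounded in $H^2$. By Rellich compactness it has an $L^2$-Cauchy subsequence, which contradicts orthonormality.

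Second, I prove $\Omega^k=\Delta_v\Omega^k\oplus\mathcal{H}^k_{\Delta,v}$. Orthogonality is immediate from self-adjointness: $(\Delta_v\alpha,h)_v=(\alpha,\Delta_v h)_v=0$. For the decomposition, the main step is to show that if $\beta\in\Omega^k$ is $(\cdot,\cdot)_v$-orthogonal to $\mathcal{H}^k_{\Delta,v}$, then $\beta=\Delta_v\alpha$ for some smooth $\alpha$. I expect this to be the main obstacle. I will use the standard route. On the orthogonal complement of the kernel, establish the estimate $\norm{\omega}_v\le C\norm{\Delta_v\omega}_v$ by a contradiction argument, again via Rellich. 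Then define a bounded linear functional $\ell(\Delta_v\phi)=(\beta,\phi)_v$ on $\Delta_v\Omega^k$; it is well defined because $\beta\perp\ker\Delta_v$. Extend $\ell$ by Hahn--Banach and apply Riesz to get a weak solution $\alpha\in L^2$ of $\Delta_v\alpha=\beta$, using the formal self-adjointness of $\Delta_v$ in $(\cdot,\cdot)_v$. Elliptic regularity then makes $\alpha$ smooth. Finally, writing $\omega=(\omega-P\omega)+P\omega$, where $P$ is the $(\cdot,\cdot)_v$-orthogonal projection onto the finite-dimensional kernel, gives the first decomposition.

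Third, from $\omega=\Delta_v\alpha+h=d(\delta_v\alpha)+\delta_v(d\alpha)+h$ we obtain $\Omega^k=d\Omega^{k-1}+\delta_v\Omega^{k+1}+\mathcal{H}^k_{\Delta,v}$. It remains to check mutual orthogonality of the three summands.
\begin{align}
(d\alpha,\delta_v\beta)_v=(dd\alpha,\beta)_v=0.
\end{align}
Next, $(d\alpha,h)_v=(\alpha,\delta_v h)_v=0$ and $(\delta_v\beta,h)_v=(\beta,dh)_v=0$. Both use the fact, shown earlier, that $\Delta_v h=0$ implies $dh=0$ and $\delta_v h=0$ on a closed manifold. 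Orthogonality makes the sum direct, which completes the proof.
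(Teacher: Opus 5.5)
Your proposal is correct and follows exactly the route the paper invokes by reference: the standard closed-manifold Hodge argument, using only the ellipticity of $\Delta_v$ and its formal self-adjointness and nonnegativity with respect to $(\cdot,\cdot)_v$. Concretely, this means G\aa rding plus Rellich for finite-dimensionality, the a priori estimate on $(\mathcal{H}^k_{\Delta,v})^\perp$ with Hahn--Banach/Riesz and elliptic regularity for solvability, and the adjointness of $d$ and $\delta_v$ for orthogonality. Your remark that $(\cdot,\cdot)_v$ is uniformly equivalent to the standard $L^2$ inner product, since $T_v$ has pointwise eigenvalues in $[1,1+\norm{v}_g^2]$, is the detail that justifies transplanting the standard argument, and the paper leaves it implicit.
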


\begin{proof}
The result follows from the ellipticity and self-adjointness of $\Delta_v$ 
by the standard arguments in Hodge theory for closed manifolds.
\end{proof}


The de Rham complex is a sequence of the space of differential forms linked by the differential $d$, given by
\begin{align*}
0 \rightarrow \Omega^{0}(M)
\xrightarrow{\,d\,}
\Omega^{1}(M)
\xrightarrow{\,d\,}
\cdots
\xrightarrow{\,d\,}
\Omega^{m}(M)
\rightarrow 0,
\end{align*}
and the $k$-th de Rham cohomology group is $H_{dR}^k(M) = \ker d^k/\img d^{k-1}$, where $d^k$ denotes the restriction of $d$ to $k$-forms. This group, by the de Rham theorem, is isomorphic to the $k$-th singular cohomology group, and thus depends only on the topology of the underlying manifold. The dimension of the de Rham cohomology group is given by the $k$-th Betti number $\beta_k$, which measures the number of $k$-dimensional holes in the manifold $M$.

Note that we only modified the codifferential in our framework, with the differential unchanged. The de Rham cohomology is thus independent of the vector field $v$. In particular, we have the following corollary following immediately from the $v$-induced Hodge decomposition.
\begin{cor}
    Each de Rham cohomology class in $H_{dR}^k(M)$ admits a unique $v$-harmonic representative. Thus, one has the isomorphism 
    \begin{align}
        \mathcal{H}_{\Delta, v}^k(M)\cong H_{dR}^k(M).
    \end{align}
\end{cor}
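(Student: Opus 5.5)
The plan is to combine the three-term $v$-Hodge decomposition of Theorem~\ref{thm.hodgedecomposition.closed} with the adjointness of $d$ and $\delta_v$ with respect to $(\cdot,\cdot)_v$, exactly as in the classical argument. We will show that the natural map
\begin{align}
    \Phi:\mathcal{H}_{\Delta, v}^k(M)\to H_{dR}^k(M),\qquad \omega\mapsto[\omega],
\end{align}
is well defined, surjective and injective. Well-definedness uses the identification $\mathcal{H}^k_{\Delta,v}(M)=\mathcal{H}^k_v(M)$ established above: on a closed manifold every $v$-harmonic form satisfies $d\omega=0$, so it defines a cohomology class.

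For surjectivity, we take a closed form $\alpha$ and decompose it by Theorem~\ref{thm.hodgedecomposition.closed} as $\alpha = d\beta+\delta_v\gamma+h$, with $h\in\mathcal{H}_{\Delta,v}^k(M)$. Applying $d$ and using $d\alpha=0$, $dd\beta=0$ and $dh=0$ gives $d\delta_v\gamma=0$. Then
\begin{align}
    \norm{\delta_v\gamma}_v^2=(\delta_v\gamma,\delta_v\gamma)_v=(d\delta_v\gamma,\gamma)_v=0,
\end{align}
by the adjointness of $d$ and $\delta_v$. Hence $\delta_v\gamma=0$, so $\alpha=d\beta+h$ and $[\alpha]=[h]$.

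For injectivity, we suppose $h\in\mathcal{H}_{\Delta,v}^k(M)$ is exact, say $h=d\beta$. Since $\delta_v h=0$, we get
\begin{align}
    \norm{h}_v^2=(d\beta,h)_v=(\beta,\delta_v h)_v=0,
\end{align}
so $h=0$. Equivalently, the directness and orthogonality of $d\Omega^{k-1}(M)\oplus\mathcal{H}^k_{\Delta,v}(M)$ in the decomposition gives the same conclusion. Uniqueness of the harmonic representative in each class is then precisely this injectivity: two harmonic representatives of the same class differ by an exact harmonic form, which must vanish. Linearity of $\Phi$ is clear.

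There is no real obstacle, since the analytic content sits in Theorem~\ref{thm.hodgedecomposition.closed}. The points to check carefully are that the $(\cdot,\cdot)_v$-adjointness holds without boundary terms, which is guaranteed because $\partial M=\emptyset$, and that $\norm{\cdot}_v$ is a genuine norm. The latter follows from the positive definiteness of $T_v$, since $\langle T_v\omega,\omega\rangle_g\ge\norm{\omega}_g^2$; this is what lets vanishing of $\norm{\delta_v\gamma}_v$ and $\norm{h}_v$ force the forms themselves to vanish.
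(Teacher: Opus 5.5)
Your proof is correct and takes essentially the same approach as the paper: the paper only asserts that the corollary follows immediately from the $v$-Hodge decomposition of Theorem~\ref{thm.hodgedecomposition.closed}. You have written out that standard argument in full, using adjointness of $d$ and $\delta_v$ to kill the $\delta_v$-component of a closed form and to show that an exact $v$-harmonic form vanishes.
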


\section{$v$-induced de Rham-Hodge theory for manifolds with boundary}

    


We now consider the case of manifolds with boundary. In the standard de Rham-Hodge theory, the differential $d$ and the codifferential $\delta$, due to the appearance of the boundary, are in general not adjoint with respect to the Hodge inner product~\eqref{eq:hodgeL2} as in the case of closed manifolds. As a consequence, the resulting Hodge theory, including the self-adjointness of the Hodge Laplacian and the Hodge decomposition, fails to hold. The space of harmonic fields $\mathcal{H}^k(M)$ is infinite dimensional and is only a subset of the space of harmonic forms $H_{\Delta}^k(M)$, which does not directly connect to the cohomology \cite{schwarz2006hodge}. To recover these properties, one must impose appropriate boundary conditions under which $d$ and $\delta$ are adjoint. The same is true in our $v$-induced framework.

Let $j:\partial M\to M$ be the inclusion map. In the standard setting with respect to the standard Hodge $L^2$-inner product~\eqref{eq:hodgeL2}, two common choices are the normal (Dirichlet) and tangential (Neumann) boundary conditions \cite{shonkwiler2009poincare}, which require that the pullback of $\omega$ and $\star\omega$ to the boundary $\partial M$ vanish, i.e., $j^*(\omega) = 0$ and $j^*(\star\omega) = 0$, respectively. Equivalently, $\omega$ is normal if $\omega$ vanishes on tangent vectors to the boundary, and it is tangential if the same condition holds for $\star\omega$. We denote by $\Omega^k_n(M)$ the space of normal $k$-forms and by $\Omega^k_t(M)$ the space of tangential forms, given as follows
\begin{align} \label{eq:bc.n}
\Omega^k_n(M) &= \{\omega\in\Omega^k(M)\, \vert\, j^*(\omega) = 0\};\\ \label{eq:bc.t}
\Omega^k_t(M) &= \{\omega\in\Omega^k(M)\, \vert\, j^*(\star\omega) = 0\}.
\end{align}
These two spaces are isomorphic under the Hodge star $\star$.  Once the normal and tangential boundary conditions are imposed, the differential $d$ and codifferential $\delta$ become adjoint with respect to the standard Hodge $L^2$-inner product~\eqref{eq:hodgeL2}. Moreover, $d$ preserves the normal boundary condition, while $\delta$ preserves the tangential boundary condition. In the case that $\partial M = \emptyset$, both spaces reduce to the space of differential forms $\Omega^k(M)$.

\subsection{$v$-induced boundary conditions}

The standard boundary conditions, however, are not sufficient to recover the adjointness between $d$ and $\delta_v$ in our $v$-induced framework, since $T_v$ preserves the boundary conditions only when $v$ is normal or tangential to the boundary, see Appendix~\ref{prop:Tv.bcs} for a proof. Note that we only modify the codifferential with the differential unchanged. To develop a framework that applies to a general vector field, we therefore introduce the following modified tangential boundary condition, which we call the $v$-tangential boundary condition, as follows:
\begin{align}
\label{eq:bc.tv}
\Omega^k_{t, v}(M) &= \{\omega\in\Omega^k(M)\, \vert\, j^*(\star_v\omega) = 0\}.
\end{align}
By definition, the space of $v$-tangential forms $\Omega^k_{t, v}(M)$ is isomorphic to the space of normal forms $\Omega^{m-k}_{n}(M)$ under the $v$-induced Hodge star $\star_v = \star T_v$. When $v$ is normal or tangential to the boundary, the $v$-tangential boundary condition reduces to the standard tangential boundary condition~\eqref{eq:bc.t}, and then the space $\Omega^k_{t, v}(M)$ identifies with the space of tangential forms $\Omega^k_t(M)$. 

\begin{rem}
The geometric meaning of the $v$-tangential boundary condition might be more transparent in the case of $1$-forms. Note that a $1$-form $\omega$ is $v$-tangential iff $T_v\omega = \omega + \omega(v)v^\flat$ satisfies the standard tangential boundary condtion, i.e., $j^*(T_v\omega) = 0$. This is equivalent to $T_v\omega({\bf n})=0$ along $\partial M$, where ${\bf n}$ denotes the outward unit normal vector field to $\partial M$. In other words, $(T_v\omega)^\sharp$ is orthogonal to the normal vector ${\bf n}$ along the boundary. Now if we split $v = v_t + \norm{v_n}_g{\bf n}$ with $v_t$ and $v_n$ being the tangent and normal parts of $v$ to $\partial M$, then 
\begin{align}
T_v\omega({\bf n}) &= (1+\norm{v_n}_g^2)\omega({\bf n}) + \norm{v_n}_g\omega(v_t) \\
&=\omega\left((1+\norm{v_n}_g^2){\bf n} + \norm{v_n}_gv_t\right) = 0.
\end{align}
It follows that the corresponding vector field $\omega^\sharp$ is orthogonal to a $v$-dependent direction along $\partial M$.
\end{rem}

Since $\delta$ preserves the standard tangential boundary condition, it is direct to verify that $\delta_v$ preserves the $v$-tangential boundary condition. In addition, by integration by parts, we have the following Green’s formula
\begin{align}\label{eq:greenformula.vinduced}
    (d\zeta, \eta)_v  =(\zeta, \delta_v\eta)_v + \int_{\partial M}\zeta\wedge\star T_v\eta.
\end{align}
One can see clearly that the differential $d$ and the $v$-induced codifferential $\delta_v$ are adjoint with respect to the $v$-induced $L^2$-inner product~\eqref{eq:HodgeL2.v.induced} when restricted to the normal and $v$-tangential forms.

Note that the Laplacian is a second-order operator. It is, in general, not sufficient to obtain a well-defined elliptic boundary value problem and its self-adjointness when one restricts it to the space of normal forms or tangential forms with just one boundary condition. Therefore, we introduce the following two subspaces, with each equipped with two boundary constraints.
\begin{align} \label{eq:bc.n.sub}
\bar\Omega^k_{n, v}(M) &= \{\omega\in \Omega^k(M)\, \vert\, j^*(\omega)=0\, ,\,  j^*(\delta_v\omega) = 0\};\\ \label{eq:bc.tv.sub}
\bar\Omega^k_{t, v}(M) &= \{\omega\in \Omega^k(M)\, \vert\, j^*(\star_v\omega)  = 0\, ,\,  j^*(\star_v d\omega) = 0\}.
\end{align}
These two subspaces remain $v$-Hodge dual to each other following Lemma~\ref{lem:starstar}, i.e., $\bar\Omega^k_{n, v}(M)\overset{\star_v}{\cong}\bar\Omega^{m-k}_{t, v}(M)$. Moreover, using Green's formula~\eqref{eq:greenformula.vinduced}, we obtain the following energy identity under these boundary constraints
\begin{align}\label{eq:energyidentity}
    (\Delta_v\omega, \eta)_v = (d\omega, d\eta)_v + (\delta_v\omega, \delta_v\eta)_v.
\end{align}
We also introduce two subspaces of the space of harmonic fields $\mathcal{H}^k_v(M)$, called the normal $v$-harmonic fields and the space of tangential $v$-harmonic fields, respectively, given by
\begin{align} \label{eq.bc.n.subspace}
\mathcal{H}^k_{n, v}(M) = \Omega^k_n(M)\cap\mathcal{H}^k_v(M)\quad\text{ and }\quad \mathcal{H}^k_{t,v}(M) = \Omega^k_{t, v}(M)\cap\mathcal{H}^k_v(M),
\end{align}
which are also isomorphic under the $v$-induced Hodge star, i.e., $\mathcal{H}^k_{t, v}(M)\overset{\star_v}{\cong}\mathcal{H}^{m-k}_{n, v}(M)$.
Denote by $\Delta_{n,v}$ and $\Delta_{t, v}$ the restrictions of the $v$-induced Hodge laplacian $\Delta_v$ to the subspaces $\bar\Omega^k_{n, v}(M)$ and $\bar\Omega^k_{t, v}(M)$, respectively. It follows immediately from the energy identity~\eqref{eq:energyidentity} that the kernels of the two operators agree with the two subspaces of harmonic fields. Specifically, we have
\begin{align}
    \ker\Delta_{n,v}^k = \mathcal{H}_{n, v}^k(M)\quad\text{ and }\quad\ker\Delta_{t,v}^k = \mathcal{H}_{t, v}^k(M).
\end{align}
When $v=0$, the spaces $\mathcal{H}^k_{n,v}(M)$ and $\mathcal{H}^k_{t,v}(M)$ reduce to the space of normal harmonic fields $\mathcal{H}^k_n(M) = \Omega^k_n(M)\cap\mathcal{H}^k(M)$ and the space of tangential harmonic fields $\mathcal{H}^k_t(M) = \Omega^k_t(M)\cap\mathcal{H}^k(M)$, which then coincide with the kernels of the standard Hodge Laplcian $\Delta^k$ under the normal and tangential boundary conditions, respectively.

\subsection{Elliptic boundary value problem}


From now on, we pass from smooth forms to the standard Sobolev setting, which provides the natural framework for the elliptic theory of boundary value problems and is needed for the decomposition results in the next subsection. For the standard Hodge Laplacian, the ellipticity of the associated boundary value problem has been established through the Lopatinski\u{\i}-\v{S}apiro condition ~\cite{schwarz2006hodge}. These elliptic results are crucial for the Hodge decompositions on manifolds with boundary, including the Hodge-Morrey decompositions \cite{morrey1956variational}, Friedrichs decomposition \cite{friedrichs1955differential}, and related results, which we will discuss in the next subsection.

Below, we show that the $v$-induced Hodge Laplacian operators with the aforementioned boundary constraints are self-adjoint and define elliptic boundary value problems. These results are the analogues of the corresponding statements of the standard Hodge Laplacian on the manifolds with boundary. 

We consider the $v$-induced Laplacian as an operator on the $H^2$-completion of the space $\bar\Omega^k_{n,v}(M)$, given by
\begin{align}\label{eq:lap.n}
 \Delta_{n,v} &: H^2\bar\Omega^k_{n, v}(M) \to L^2\Omega^k(M),
\end{align}
with $L^2\Omega^k(M)$ being the $L^2$-completion of the space of smooth forms $\Omega^k(M)$. With a slight abuse of notation, here we continue to use the same notation $\Delta_{n,v}$ for the operator in the Sobolev setting as for the restriction of $\Delta_v$ to the space $\bar\Omega^k_{n, v}(M)$ of smooth forms. The associated boundary value problem is:
\begin{equation}\label{eq:bvp}
\begin{aligned}
&\Delta_{n, v} \omega = \eta &&\quad\text{ on } M,\\
j^* \omega = 0 \quad &\text{and} \quad j^*(\delta_v \omega)=0 &&\quad\text{ on } \partial M.
\end{aligned}
\end{equation}
By the same energy identity formula~\eqref{eq:energyidentity} as in the smooth setting, the operator $\Delta_{n,v}$ is self-adjoint and positive-semidefinite with respect to the inner product~\eqref{eq:HodgeL2.v.induced}. We show below that the boundary value problem~\eqref{eq:bvp} for the operator $\Delta_{n,v}$ is elliptic. Since $M$ is compact, these properties of $\Delta_{n,v}$ ensure that it is Fredholm of index $0$ with finite dimensional kernel and cokernel and closed range. In addition, its spectrum is discrete and consists of a nondecreasing sequence of non-negative real eigenvalues with finite multiplicity, and the corresponding eigenspaces are mutually orthogonal. The corresponding results for the operator $\Delta_{t, v}: H^2\bar\Omega^k_{t,v}(M)\to L^2\Omega^k(M)$ under the dual boundary conditions follow from the same arguments. Here we omit the details.

We now state the following theorem.

\begin{thm}
    The boundary value problem~\eqref{eq:bvp} is elliptic in the sense of Lopatinski\u{\i}--\v{S}apiro condition.
\end{thm}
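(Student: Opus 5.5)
We will verify the Lopatinskiĭ–Šapiro condition by freezing coefficients at a boundary point and reducing to a constant-coefficient ODE problem on the half-line. Fix $x\in\partial M$ and choose coordinates near $x$ in which $g(x)$ is the Euclidean metric and $M$ is locally $\{x_m\ge 0\}$, with inward normal $\partial_{x_m}$. Only the principal parts of the interior operator and of the boundary operators enter, frozen at $x$.

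\emph{Principal parts.} The principal symbol of $\Delta_v$ is $\sigma(\xi)=\xi\wedge\sigma_d(\xi)^*+\sigma_d(\xi)^*\xi\wedge$. Here $\sigma_d(\xi)^*=T_v^{-1}\iota_{\xi^\sharp}T_v$ is the adjoint of $\xi\wedge$ for $\langle\cdot,\cdot\rangle_{g,v}$, consistent with $\delta_v=T_v^{-1}\delta T_v$. The boundary operator $j^*\omega$ has order $0$ and principal part $j^*$. The operator $j^*\delta_v\omega$ has order $1$ and principal symbol $-j^*(T_v^{-1}\iota_{\xi^\sharp}T_v\omega)$. The ellipticity of $\sigma(\xi)$ for real $\xi\neq0$ is the preceding theorem.

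\emph{Reduction to the half-line.} For a tangential covector $\xi'\neq 0$, replace $\xi_m$ by $-i\,d/dt$. We must show that the only bounded (decaying) solution on $t\ge0$ of
\begin{align}
\sigma(\xi',-i\partial_t)\omega(t)=0,\qquad j^*\omega(0)=0,\qquad j^*\bigl(T_v^{-1}\iota_{(\xi',-i\partial_t)^\sharp}T_v\,\omega\bigr)(0)=0
\end{align}
is $\omega\equiv0$. Counting also matters: the stable subspace has dimension $\binom{m}{k}$, which equals the number of boundary conditions, $\binom{m-1}{k}+\binom{m-1}{k-1}$. This follows from strong ellipticity, so uniqueness suffices.

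\emph{Uniqueness via an energy identity.} This is the main obstacle. We argue with an energy identity, mirroring the interior proof, instead of computing roots. Let $\omega(t)$ be a decaying solution, and set $D=\xi'\wedge-\,i\,dt\wedge\partial_t$ (the frozen $d$) and $D^*_v$ its formal $\langle\cdot,\cdot\rangle_{g,v}$-adjoint on $L^2([0,\infty))$. Integrating $\langle (DD_v^*+D_v^*D)\omega,\omega\rangle_{g,v}$ over $t\in[0,\infty)$ and integrating by parts gives a frozen Green formula analogous to \eqref{eq:greenformula.vinduced}. The boundary terms are pointwise pairings at $t=0$ of the type $\langle dt\wedge\alpha,T_v\beta\rangle_g$, namely $\langle dt\wedge D^*_v\omega,T_v\omega\rangle$ and $\langle dt\wedge\omega,T_vD\omega\rangle$, up to conjugation and sign.

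\emph{Killing the boundary terms.} Every such term pairs a form containing a factor $dt$ with either $\omega(0)$ or $D_v^*\omega(0)$. By Lemma~\ref{lem.wedge.interiorprod.adjoint}, it equals a pairing with the normal contraction $\iota_{\partial_t}$ of that form. In the frozen frame, $j^*\eta=0$ means $\eta=dt\wedge\iota_{\partial_t}\eta$. Write $\omega(0)=dt\wedge\alpha$ and $D_v^*\omega(0)=dt\wedge\beta$. Then the first term is $\langle dt\wedge\beta,dt\wedge\alpha\rangle_{g,v}$-type, which must be rewritten so that one factor is $j^*$ of something vanishing. The careful point is that $T_v$ mixes normal and tangential parts when $v$ is oblique. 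We therefore group the terms as $\langle D\alpha',\omega\rangle_{g,v}-\langle\alpha',D_v^*\omega\rangle_{g,v}$, whose boundary contribution is exactly the frozen form of $\int_{\partial M}\zeta\wedge\star T_v\eta$. This integrand vanishes whenever $j^*\zeta=0$, since the pullback of a wedge is the wedge of pullbacks. Applying this with $\zeta=\omega$ and $\zeta=D_v^*\omega$ kills both terms.

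\emph{Conclusion.} We obtain $\|D\omega\|_v^2+\|D_v^*\omega\|_v^2=0$, so $D\omega=0$ and $D_v^*\omega=0$ for all $t$. Decomposing along $\xi'$ as in the interior ellipticity proof, with the exactness of $\xi\wedge$, this first-order system has exponential solutions $e^{-|\xi'|_*t}$ with $\omega=(\xi'+i|\xi'|_*dt)\wedge\eta$, where $|\xi'|_*$ is computed in the $T_v$-metric. The condition $j^*\omega(0)=0$ then forces $\xi'\wedge j^*\eta=0$, hence $\omega=0$. If the grouping of boundary terms fails, the fallback is a direct computation. In an adapted basis where $v=a\partial_t+b e_1$, one writes $T_v$ explicitly and checks the determinant condition, which reduces to a block-triangular structure in the normal/tangential splitting.
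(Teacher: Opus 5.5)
Your route differs from the paper's. The paper argues by comparison with the standard normal problem for $\Delta$: it notes that $\delta_v=T_v^{-1}\delta T_v$ differs from $\delta$ only by invertible zeroth-order factors. You instead verify the Lopatinski\u{\i}--\v{S}apiro condition directly on the half-line. Most of your argument is sound, including the reduction, the count $\binom{m}{k}=\binom{m-1}{k}+\binom{m-1}{k-1}$ via strong ellipticity, and the energy step. For decaying $\alpha,\beta$ one has $\int_0^\infty\langle D\alpha,\beta\rangle_{g,v}\,dt=\int_0^\infty\langle\alpha,D_v^*\beta\rangle_{g,v}\,dt+B(\alpha,\beta)$ with $B(\alpha,\beta)=i\langle dt\wedge\alpha(0),\beta(0)\rangle_{g,v}$. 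This term vanishes as soon as $dt\wedge\alpha(0)=0$, independently of $T_v$. So the grouping you finally settle on does kill both boundary terms, using $j^*D_v^*\omega(0)=0$ and $j^*\omega(0)=0$. The obliqueness of $v$ plays no role, and you legitimately obtain $D\omega\equiv0$ and $D_v^*\omega\equiv0$.

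The gap is the final step. Writing $\omega(0)=\zeta\wedge\eta$ with $\zeta=\xi'+i\lambda\,dt$, you use only $j^*\omega(0)=0$, and the implication ``$\xi'\wedge j^*\eta=0$, hence $\omega=0$'' is false. For $2\le k\le m$, any nonzero $\omega_0=dt\wedge\xi'\wedge\mu$ with $\mu$ tangential satisfies $\zeta\wedge\omega_0=0$ and $j^*\omega_0=0$. For $k=m$ the normal condition is vacuous altogether, so all the information sits in the second boundary condition. What rules these forms out is $D_v^*\omega=0$, which your conclusion never uses. The explicit exponent is also off for oblique $v$: the decaying root is the root with positive real part of $G(\xi'+i\lambda\,dt,\xi'+i\lambda\,dt)=0$, where $G=g^{-1}+v\otimes v$, and it is non-real when $\xi'(v)\,dt(v)\neq0$. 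You do not need it, though.

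The repair stays inside your method, as a half-line version of the interior ``exactness plus pairing'' argument. Write $\omega=\omega_T+dt\wedge\omega_N$; then $D\omega=0$ means $\xi'\wedge\omega_T=0$ and $\omega_T'=i\,\xi'\wedge\omega_N$. Hence $\alpha(t)=-i\int_t^\infty\omega_N\,ds$ decays and satisfies $D\alpha=\omega$. Since $\xi'\wedge\alpha(0)=\omega_T(0)=0$, exactness of $\xi'\wedge$ gives $\alpha(0)=\xi'\wedge\gamma$. Then $\tilde\alpha=\alpha-D(e^{-t}\gamma)$ still satisfies $D\tilde\alpha=\omega$, now with $j^*\tilde\alpha(0)=0$. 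Therefore $\int_0^\infty\norm{\omega}_{g,v}^2\,dt=\int_0^\infty\langle\tilde\alpha,D_v^*\omega\rangle_{g,v}\,dt+B(\tilde\alpha,\omega)=0$, which gives $\omega\equiv0$ and completes the verification.
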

\begin{proof}
Since the ellipticity is a local property, the operator $\Delta_{n,v}$ is elliptic in the interior exactly as in the case of closed manifolds. For the boundary conditions, note that $T_v$ is an invertible bundle map of order $0$, and $\delta_v = T_v^{-1}\delta T_v$. The principal symbol of the boundary operator $j^*(\delta_v\omega)$ differs from the standard boundary operator $j^*(\delta\omega)$ only by composition with $T_v$ and its invertible map $T_v^{-1}$, which does not affect the ellipticity of the boundary condition. As a result, the boundary value problem~\eqref{eq:bvp} of $\Delta_{n,v}$ is elliptic in the sense of Lopatinski\u{\i}--\v{S}apiro condition, just as in the corresponding boundary value problem for the standard Hodge Laplacian $\Delta$.
\end{proof}

The elliptic regularity of $\Delta_{n,v}$ yields immediately the following corollary.
\begin{cor}\label{cor:harmonicfieldissmooth}
    If $\eta\in\Omega^k(M)$ is a smooth $k$-form and $\omega\in H^2\Omega^k(M)$ is a strong solution of~\eqref{eq:bvp}, then $\omega$ is also smooth.
\end{cor}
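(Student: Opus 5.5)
The plan is to invoke the standard elliptic regularity theory for boundary value problems satisfying the Lopatinski\u{\i}--\v{S}apiro condition, applied to \eqref{eq:bvp}, and then bootstrap. The preceding theorem gives ellipticity of \eqref{eq:bvp}. We first check that the system has smooth coefficients. The operator $\Delta_v = d T_v^{-1}\delta T_v + T_v^{-1}\delta T_v d$ is a second-order operator whose coefficients are built from $g$, $v$ and $(1+\norm{v}_g^2)^{-1}$, all smooth on the compact manifold $M$. The boundary operators $\omega\mapsto j^*\omega$ (order $0$) and $\omega\mapsto j^*(\delta_v\omega) = j^*(T_v^{-1}\delta T_v\omega)$ (order $1$) likewise have smooth coefficients. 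The orders of the boundary operators are $0$ and $1$, both less than $2$, which is the setting of the Agmon--Douglis--Nirenberg a priori estimates (equivalently, Schwarz's treatment for the standard Hodge Laplacian).

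Next, we quote the regularity estimate. For every $s\ge 0$ there is a constant $C_s$ such that, for $\omega\in H^{s+2}\Omega^k(M)$,
\begin{align}
\norm{\omega}_{H^{s+2}} \le C_s\left(\norm{\Delta_v\omega}_{H^{s}} + \norm{j^*\omega}_{H^{s+3/2}(\partial M)} + \norm{j^*\delta_v\omega}_{H^{s+1/2}(\partial M)} + \norm{\omega}_{L^2}\right).
\end{align}
The accompanying regularity statement says that if $\omega\in H^{s+1}$, $\Delta_v\omega\in H^{s}$, and the boundary data lie in the corresponding trace spaces, then $\omega\in H^{s+2}$. In the interior this is proved by difference quotients. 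Near the boundary one uses collar coordinates and tangential difference quotients, then recovers the normal derivatives from the equation using ellipticity, and the Lopatinski\u{\i}--\v{S}apiro condition is exactly what makes the boundary estimate valid.

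The bootstrap then runs as follows. Start from $\omega\in H^2$, a strong solution, with $\Delta_v\omega=\eta$ smooth, hence in every $H^s$, and with homogeneous (zero) boundary data. Applying the regularity statement with $s=1$ gives $\omega\in H^3$. Inducting, $\omega\in H^{s}$ for all $s$. By the Sobolev embedding on the compact manifold with boundary, $\bigcap_s H^s\Omega^k(M)\subset \Omega^k(M)$ (smooth up to the boundary), so $\omega$ is smooth.

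The main obstacle is justifying the boundary regularity step for this specific system. It is not a scalar problem, and the boundary operator $j^*\delta_v$ mixes tangential and normal components through $T_v$ when $v$ is neither normal nor tangential. I would handle this by working in a boundary collar and noting that, after conjugation by the invertible order-zero bundle map $T_v$, the boundary operators have the same principal structure as the standard ones, as used in the ellipticity proof. The general ADN/Schwarz regularity theorem, which requires only the covering condition and smooth coefficients, then applies verbatim. No further verification beyond the ellipticity already established should be needed.
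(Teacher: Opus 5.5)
Your proposal is correct and takes the paper's route. The paper simply says the corollary is immediate from elliptic regularity for the Lopatinski\u{\i}--\v{S}apiro-elliptic problem~\eqref{eq:bvp}, and your a priori estimate, $H^s$ bootstrap with zero boundary data, and Sobolev embedding spell that out. One small caveat: your aside that conjugating by $T_v$ turns the boundary operators into the standard ones is not literally right, because for $v$ neither normal nor tangent to $\partial M$ the map $j^*$ does not commute with $T_v^{\pm1}$, so $j^*\omega=0$ does not become $j^*(T_v\omega)=0$; nothing in your argument depends on it, since you take the ellipticity theorem as already established.
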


It follows directly from Corollary~\ref{cor:harmonicfieldissmooth} that all elements in the kernel of the operator $\Delta_{n,v}$ on $k$-forms in the Sobolev setting are smooth, and thus this kernel can be identified with the space of normal $v$-harmonic fields $\mathcal{H}^k_{n,v}(M)$. In addition, since $M$ is compact, the operator $\Delta_{n,v}$ is Fredholm of index $0$ with finite-dimensional kernel. We immediately have the following theorem.
\begin{thm}
    The space $\mathcal{H}^k_{n,v}(M)$ is finite-dimensional.
\end{thm}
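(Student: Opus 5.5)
The plan is to identify $\mathcal{H}^k_{n,v}(M)$ with the kernel of the Sobolev operator $\Delta_{n,v}: H^2\bar\Omega^k_{n,v}(M)\to L^2\Omega^k(M)$. Then finite-dimensionality reduces to the Fredholm property furnished by the ellipticity theorem just proved.

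\emph{Step 1: $\mathcal{H}^k_{n,v}(M)\subset\ker\Delta_{n,v}$.} Let $\omega\in\mathcal{H}^k_{n,v}(M)$. Then $\omega$ is smooth, $j^*\omega=0$, $d\omega=0$ and $\delta_v\omega=0$. In particular $j^*(\delta_v\omega)=0$, so $\omega\in\bar\Omega^k_{n,v}(M)\subset H^2\bar\Omega^k_{n,v}(M)$. Moreover $\Delta_v\omega=d\delta_v\omega+\delta_v d\omega=0$.

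\emph{Step 2: $\ker\Delta_{n,v}\subset\mathcal{H}^k_{n,v}(M)$.} Let $\omega\in H^2\bar\Omega^k_{n,v}(M)$ with $\Delta_{n,v}\omega=0$. Corollary~\ref{cor:harmonicfieldissmooth} with $\eta=0$ shows that $\omega$ is smooth. Hence $\omega\in\bar\Omega^k_{n,v}(M)$, and the energy identity~\eqref{eq:energyidentity} with $\eta=\omega$ gives
\begin{align}
0=(\Delta_v\omega,\omega)_v=\norm{d\omega}_v^2+\norm{\delta_v\omega}_v^2 .
\end{align}
Since $\norm{\cdot}_v$ is a genuine norm (by the positive definiteness of $T_v$), this yields $d\omega=0$ and $\delta_v\omega=0$. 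Together with $j^*\omega=0$, we get $\omega\in\mathcal{H}^k_{n,v}(M)$.

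\emph{Step 3: finite-dimensionality.} By the previous theorem, the boundary value problem~\eqref{eq:bvp} satisfies the Lopatinski\u{\i}--\v{S}apiro condition on the compact manifold $M$. The standard a priori estimate then holds:
\begin{align}
\norm{\omega}_{H^2}\le C\left(\norm{\Delta_{n,v}\omega}_{L^2}+\norm{\omega}_{L^2}\right).
\end{align}
On the kernel this reduces to $\norm{\omega}_{H^2}\le C\norm{\omega}_{L^2}$. The inclusion $H^2\hookrightarrow L^2$ is compact by Rellich, so the closed unit ball of the kernel in the $L^2$ norm is compact. Therefore the kernel is finite-dimensional; equivalently, this is the Fredholm property of $\Delta_{n,v}$.

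The main obstacle is justifying the a priori estimate. It rests entirely on the ellipticity theorem, whose proof treats the boundary operator $j^*\delta_v=j^*T_v^{-1}\delta T_v$ as a zeroth-order conjugation of the standard one. I would take that theorem as given and invoke the general theory of elliptic boundary value problems, as in Schwarz~\cite{schwarz2006hodge}. Everything else is formal.
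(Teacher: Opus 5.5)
Your proposal is correct and follows the paper's argument. You identify the kernel of $\Delta_{n,v}$ with $\mathcal{H}^k_{n,v}(M)$ via elliptic regularity (Corollary~\ref{cor:harmonicfieldissmooth}) and the energy identity, then deduce finite-dimensionality from the Fredholm property of the elliptic boundary value problem; your Step 3 only unpacks that property as an a priori estimate plus Rellich compactness, where the paper simply cites it (the closedness of the kernel in $L^2$ that you use implicitly follows from the same estimate).
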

Correspondingly, we have the identification between the kernel of $\Delta_{t,v}$ on $k$-forms in the Sobolev setting and the space of tangential $v$-harmonic fields $\mathcal{H}^k_{t,v}(M)$, which is also finite-dimensional.

\begin{prop}\label{prop:uniquePotential}
    For each $\eta\in(\mathcal{H}^k_{n,v}(M))^{\perp}$, there is a unique solution $\omega\in H^2\bar\Omega^k_{n,v}(M)\cap(\mathcal{H}^k_{n,v}(M))^\perp$ to the boundary value problem~\eqref{eq:bvp}.
\end{prop}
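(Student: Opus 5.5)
The plan is to treat existence as a consequence of the Fredholm alternative for the self-adjoint elliptic operator $\Delta_{n,v}$, and uniqueness as a consequence of the energy identity~\eqref{eq:energyidentity}. We use the facts established above. The boundary value problem~\eqref{eq:bvp} is elliptic in the Lopatinski\u{\i}--\v{S}apiro sense. The operator $\Delta_{n,v}\colon H^2\bar\Omega^k_{n,v}(M)\to L^2\Omega^k(M)$ is Fredholm of index $0$ with closed range. Its kernel is $\mathcal{H}^k_{n,v}(M)$, which is finite-dimensional and consists of smooth forms by Corollary~\ref{cor:harmonicfieldissmooth}.

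\textbf{Step 1: the range is the orthogonal complement of the kernel.} Because the range is closed, we have $\img\Delta_{n,v} = ((\img\Delta_{n,v})^{\perp})^{\perp}$, with orthogonality taken in the $v$-induced inner product~\eqref{eq:HodgeL2.v.induced}. This inner product is equivalent to the standard $L^2$ one, since $T_v$ is a positive definite bundle map with inverse bounded on compact $M$. So it suffices to show $(\img\Delta_{n,v})^{\perp} = \mathcal{H}^k_{n,v}(M)$.

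The inclusion $\mathcal{H}^k_{n,v}(M)\subset(\img\Delta_{n,v})^{\perp}$ is direct. For $h\in\mathcal{H}^k_{n,v}(M)$ and $\omega\in H^2\bar\Omega^k_{n,v}(M)$, Green's formula~\eqref{eq:greenformula.vinduced}, extended by density to $H^2$, gives
\begin{align}
(\Delta_{v}\omega,h)_v=(d\omega,dh)_v+(\delta_v\omega,\delta_v h)_v=0.
\end{align}
The boundary terms vanish because $j^*\omega=0$, $j^*\delta_v\omega=0$, and $h$ is normal.

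For the reverse inclusion, the Fredholm index is $0$, so $\dim(\img\Delta_{n,v})^{\perp}=\dim\ker\Delta_{n,v}=\dim\mathcal{H}^k_{n,v}(M)$, and equality follows. Alternatively, one can argue directly: if $\zeta\perp\img\Delta_{n,v}$, elliptic regularity for the adjoint problem (identified with~\eqref{eq:bvp} by self-adjointness) shows that $\zeta$ is smooth and lies in the kernel. This step is where I expect the only real subtlety. Index $0$ and self-adjointness were asserted rather than derived, and the cleanest justification is exactly that the adjoint boundary problem of~\eqref{eq:bvp} with respect to $(\cdot,\cdot)_v$ is~\eqref{eq:bvp} itself. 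The Green's formula with the boundary term $\int_{\partial M}\zeta\wedge\star T_v\eta$ shows this, since both $j^*\omega$ and $j^*(\star_v\,\cdot)$-type terms cancel under the two constraints.

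\textbf{Step 2: existence.} Given $\eta\perp\mathcal{H}^k_{n,v}(M)$, Step 1 yields some $\omega_0\in H^2\bar\Omega^k_{n,v}(M)$ with $\Delta_{n,v}\omega_0=\eta$. Let $P$ denote the $(\cdot,\cdot)_v$-orthogonal projection onto the finite-dimensional space $\mathcal{H}^k_{n,v}(M)$, and set $\omega=\omega_0-P\omega_0$. Since elements of $\mathcal{H}^k_{n,v}(M)$ are smooth, normal, and satisfy $\delta_v h=0$, they lie in $\bar\Omega^k_{n,v}(M)$. Hence $\omega$ still satisfies both boundary conditions, $\Delta_{n,v}\omega=\eta$, and $\omega\perp\mathcal{H}^k_{n,v}(M)$.

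\textbf{Step 3: uniqueness.} Suppose $\omega_1,\omega_2$ are two such solutions. Their difference lies in $\ker\Delta_{n,v}=\mathcal{H}^k_{n,v}(M)$ and is also orthogonal to it, so it vanishes.
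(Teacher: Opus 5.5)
Your proposal is correct and follows the paper's own argument: closed range plus self-adjointness (Fredholm of index $0$) gives $\img\Delta_{n,v} = (\mathcal{H}^k_{n,v}(M))^{\perp}$, which yields existence, and uniqueness follows because the difference of two solutions lies in $\ker\Delta_{n,v}\cap(\ker\Delta_{n,v})^{\perp} = 0$. Your projection $\omega_0 - P\omega_0$ also makes explicit a step the paper leaves implicit, namely that the preimage can be chosen orthogonal to $\mathcal{H}^k_{n,v}(M)$.
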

\begin{proof}
    By the Fredholmness and self-adjointness of $\Delta_{n,v}$, its range is closed in $L^2\Omega^k(M)$, and agrees with the orthogonal complement of its kernel, i.e.,  $(\mathcal{H}^k_{n,v}(M))^{\perp} = \Delta_{n,v}(H^2\bar\Omega^k_{n,v}(M))$. This establishes the existence of a potential $\omega$ for the boundary value problem~\eqref{eq:bvp} when $\eta\in(\mathcal{H}^k_{n,v}(M))^{\perp}$. Now let $\omega_1$ and $\omega_2$ be two solutions of \eqref{eq:bvp} in $H^2\bar\Omega^k_{n,v}(M)\cap(\mathcal{H}^k_{n,v}(M))^\perp$. Then $$\omega_1-\omega_2\in\ker\Delta_{n,v} = \mathcal{H}^k_{n,v}(M).$$ Thus $\omega_1-\omega_2 = 0$, which proves the uniqueness.
\end{proof}







\subsection{$v$-induced Hodge decomposition on manifolds with boundary}




For manifolds with boundary, the Hodge-Morrey and Friedrichs decompositions provide the basic and refined splittings of the space of differential forms with respect to the standard Hodge $L^2$-inner product~\cite{schwarz2006hodge}. In this subsection, we establish the corresponding Hodge decompositions in our $v$-induced framework. The proofs follow from the ellipticity of the associated boundary value problems and are analogous to those in the standard case~\cite{schwarz2006hodge}, so we omit them here. We formulate the decomposition results at the $L^2$-level. However, they remain valid for smooth forms by the regularity of the elliptic operator.

We denote by
\begin{align}
    \mathcal{E}^k(M) &= \{d\alpha\,\vert\,\alpha\in H^1\Omega_n^{k-1}(M)\} \subset L^2\Omega^k(M)\\
    \mathcal{C}_v^k(M) &= \{\delta_v\alpha\,\vert\,\alpha\in H^1\Omega_{t,v}^{k+1}(M)\} \subset L^2\Omega^k(M)
\end{align}
the subspaces of exact normal and coexact $v$-tangential forms, respectively.
The $v$-induced Hodge-Morrey decomposition is then given below.
\begin{thm}[$v$-Hodge-Morrey decomposition]\label{thm.HodgeMorrey}
One has the following orthogonal decomposition
\begin{align}
    L^2\Omega^k(M) = \mathcal{E}^k(M)\oplus\mathcal{C}_v^k(M)\oplus L^2\mathcal{H}^k_{v}(M)
\end{align}
with respect to the $v$-induced inner product~\eqref{eq:HodgeL2.v.induced}.
\end{thm}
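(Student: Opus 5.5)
The plan is to follow the standard Hodge--Morrey proof in \cite{schwarz2006hodge}, replacing $\delta$ by $\delta_v$ and the tangential condition by the $v$-tangential one, and using Green's formula~\eqref{eq:greenformula.vinduced} throughout. There are three steps: mutual orthogonality of the three spaces, closedness of $\mathcal{E}^k(M)$ and $\mathcal{C}^k_v(M)$ in $L^2$, and identification of the orthogonal complement of $\mathcal{E}^k(M)\oplus\mathcal{C}^k_v(M)$ with $L^2\mathcal{H}^k_v(M)$.

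\emph{Orthogonality.} By density it suffices to take $\alpha\in\Omega^{k-1}_n(M)$, $\beta\in\Omega^{k+1}_{t,v}(M)$ and $\lambda\in\mathcal{H}^k_v(M)$. The three pairings are handled as follows.
\begin{align}
(d\alpha,\delta_v\beta)_v &= (\alpha,\delta_v\delta_v\beta)_v + \int_{\partial M} j^*\alpha\wedge j^*(\star_v\delta_v\beta) = 0,\\
(d\alpha,\lambda)_v &= (\alpha,\delta_v\lambda)_v + \int_{\partial M} j^*\alpha\wedge j^*(\star_v\lambda) = 0,\\
(\lambda,\delta_v\beta)_v &= (d\lambda,\beta)_v - \int_{\partial M} j^*\lambda\wedge j^*(\star_v\beta) = 0.
\end{align}
In each line the interior term vanishes because $\delta_v\delta_v=0$, $\delta_v\lambda=0$, or $d\lambda=0$. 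The boundary terms vanish because $j^*\alpha=0$ in the first two lines and $j^*(\star_v\beta)=0$ in the third. The first and third lines use Green's formula~\eqref{eq:greenformula.vinduced} with $\star T_v=\star_v$.

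\emph{Complement.} Suppose $\omega\in L^2\Omega^k(M)$ is $v$-orthogonal to $\mathcal{E}^k(M)\oplus\mathcal{C}^k_v(M)$. Testing against $d\alpha$ with $\alpha$ compactly supported in the interior gives $\delta_v\omega=0$ weakly. Testing against $\delta_v\beta$ gives $d\omega=0$ weakly. It remains to show that such an $\omega$ lies in $H^1$ and is smooth, so that $\omega\in L^2\mathcal{H}^k_v(M)$. For this I would use the ellipticity of the system $(d,\delta_v)$, that is, of $D_v=d+\delta_v$, whose principal symbol is invertible by the same argument as in the ellipticity theorem for $\Delta_v$. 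In this step the boundary behaviour of $\omega$ is not constrained, which matches the statement: the third summand is the full $L^2\mathcal{H}^k_v(M)$ with no boundary condition.

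\emph{Closedness.} This is the main obstacle. Here I would work with the operators $\Delta_{n,v}$ and $\Delta_{t,v}$. Take $\eta\in\overline{\mathcal{E}^k(M)}$. Then $\eta$ is $v$-orthogonal to $\mathcal{H}^k_{n,v}(M)\subset\mathcal{H}^k_v(M)$. By Proposition~\ref{prop:uniquePotential}, in its $L^2$ form, there is $\phi\in H^2\bar\Omega^k_{n,v}(M)$ with $\Delta_{n,v}\phi=\eta$. I would then show that $\eta=d\delta_v\phi$, i.e.\ that $\delta_v d\phi$ is $v$-orthogonal to everything, using the orthogonality relations together with the boundary conditions $j^*\phi=0$ and $j^*\delta_v\phi=0$. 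This exhibits $\eta=d(\delta_v\phi)$ with $\delta_v\phi\in H^1\Omega^{k-1}_n(M)$, so $\eta\in\mathcal{E}^k(M)$.

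The symmetric argument with $\Delta_{t,v}$, whose theory is asserted to follow in the same way, handles $\mathcal{C}^k_v(M)$. The Hodge star $\star_v$ dualities $\Omega^k_{t,v}\cong\Omega^{m-k}_n$ and $\bar\Omega^k_{n,v}(M)\overset{\star_v}{\cong}\bar\Omega^{m-k}_{t,v}(M)$ convert the $v$-tangential problem into a normal one. Care is needed because $\star_v^{-1}$ is not simply $\pm\star_v$; this is where Lemma~\ref{lem:starTv} and Lemma~\ref{lem:starstar} should be invoked. An alternative route to closedness is a Gaffney-type inequality
\begin{align}
\norm{\alpha}_{H^1}\le C\bigl(\norm{d\alpha}_v+\norm{\delta_v\alpha}_v+\norm{\alpha}_v\bigr)
\end{align}
on $H^1\Omega_n$ and $H^1\Omega_{t,v}$. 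Such an inequality follows from the Lopatinski\u{\i}--\v{S}apiro ellipticity established above, since $T_v$ is an invertible zeroth-order bundle map.
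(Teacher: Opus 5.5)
\noindent Your orthogonality and closedness steps follow the Schwarz-type argument the paper defers to, and they work. In the closedness step, the clean finish is this: $\delta_v d\phi=\eta-d\delta_v\phi$ lies in $\overline{\mathcal{E}^k(M)}$, and by Green's formula with $j^*d\gamma=dj^*\gamma=0$ it is $v$-orthogonal to $\mathcal{E}^k(M)$, hence vanishes.

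The genuine gap is in the complement step. An $\omega$ that is $v$-orthogonal to $\mathcal{E}^k(M)\oplus\mathcal{C}^k_v(M)$ is only a weak solution of $d\omega=0$, $\delta_v\omega=0$ with no boundary condition. Ellipticity of $D_v$ gives smoothness only in the interior, and your claim that $\omega\in H^1$ (let alone smooth up to $\partial M$) is false. For a concrete case, take the unit disk with $k=1$, and let $f\in H^1$ solve $\Delta_v f=0$ weakly with $f|_{\partial M}\in H^{1/2}\setminus H^{3/2}$. Then $\omega=df$ is $v$-orthogonal to $\mathcal{E}^1\oplus\mathcal{C}^1_v$ but $\omega\notin H^1$, since $f\notin H^2$. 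More structurally, suppose the complement were contained in $H^1$. Then the closed graph theorem and Rellich's theorem would force it to be finite-dimensional, yet for $0<k<m$ it contains the infinite-dimensional space $\mathcal{H}^k_v(M)$. Your remark that the boundary behaviour of $\omega$ is unconstrained is exactly why no boundary regularity is available.

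\noindent What must be shown instead is that $\omega$ is an $L^2$-limit of smooth $v$-harmonic fields. This is the reading of $L^2\mathcal{H}^k_v(M)$, namely the $L^2$-closure of $\mathcal{H}^k_v(M)$, under which your density reduction in the orthogonality step is legitimate. Take smooth $\omega_j\to\omega$ in $L^2$. By Green's formula, $\delta_v\omega_j$ is $v$-orthogonal to $\mathcal{H}^{k-1}_{n,v}(M)$ and $d\omega_j$ is $v$-orthogonal to $\mathcal{H}^{k+1}_{t,v}(M)$. Proposition~\ref{prop:uniquePotential} and its tangential analogue then give solutions of $\Delta_{n,v}\alpha_j=\delta_v\omega_j$ and $\Delta_{t,v}\beta_j=d\omega_j$, which are smooth by Corollary~\ref{cor:harmonicfieldissmooth}. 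The boundary conditions yield $d\delta_v\alpha_j=0$ and $\delta_v d\beta_j=0$ (pair, e.g., $d\delta_v\alpha_j=\delta_v(\omega_j-d\alpha_j)$ with itself and use $j^*\delta_v\alpha_j=0$). Hence $h_j=\omega_j-d\alpha_j-\delta_v\beta_j$ lies in $\mathcal{H}^k_v(M)$. Since $d\alpha_j\in\mathcal{E}^k(M)$ and $\delta_v\beta_j\in\mathcal{C}^k_v(M)$, $h_j$ is the orthogonal projection of $\omega_j$ onto the complement, so $h_j\to\omega$.

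Alternatively, you could define $L^2\mathcal{H}^k_v(M)$ as the weakly closed, weakly $v$-coclosed $L^2$-forms. Your complement step then suffices as it stands. However, the orthogonality step can no longer be reduced to smooth $\lambda$ by density, and would need a weak Green formula instead.
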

To establish the analogue of the Friedrichs decomposition, we consider the space of exact $v$-harmonic fields and the space of coexact $v$-harmonic fields, respectively, given by
\begin{align}
    \mathcal{H}^k_{v, \on{ex}}(M) &= \left\{h\in \mathcal{H}^k_v(M)\,\vert\, h = d\epsilon \right\},\\
    \mathcal{H}^k_{v, \on{co}}(M) &= \left\{h\in \mathcal{H}^k_v(M)\,\vert\, h = \delta_v\gamma \right\}
\end{align}
One then has the following $v$-Friedrichs decomposition.
\begin{thm}[$v$-Friedrichs decomposition]\label{thm.Friedrichs}
The space of $v$-harmonic fields can be further orthogonally split as follows
\begin{align}
    \mathcal{H}^k_v(M) = \mathcal{H}^k_{n,v}(M)\oplus \mathcal{H}^k_{v, \on{co}}(M) = \mathcal{H}^k_{t,v}(M)\oplus \mathcal{H}^k_{v, \on{ex}}(M)
\end{align}
with respect to the $v$-induced inner product~\eqref{eq:HodgeL2.v.induced}. 
\end{thm}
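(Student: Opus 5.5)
The plan is to mirror Schwarz's proof of the standard Friedrichs decomposition, working throughout with the $v$-induced inner product~\eqref{eq:HodgeL2.v.induced}, Green's formula~\eqref{eq:greenformula.vinduced}, and the elliptic theory for $\Delta_{n,v}$ and $\Delta_{t,v}$ (Proposition~\ref{prop:uniquePotential} and its tangential analogue). We prove only the first splitting, $\mathcal{H}^k_v=\mathcal{H}^k_{n,v}\oplus\mathcal{H}^k_{v,\on{co}}$. The second splitting then follows by applying the first one in degree $m-k$ and transporting through $\star_v$. This transport uses three facts:
\begin{itemize}
\item $\star_v$ maps $\Omega_n$ onto $\Omega_{t,v}$ (by definition of the $v$-tangential condition);
\item by Lemma~\ref{lem:starstar} and $\delta_v=\pm\star_v^{-1}d\star_v$, it interchanges $d$-closed with $\delta_v$-coclosed forms, and exact with $v$-coexact forms;
\item it preserves orthogonality.
\end{itemize}
For the last point I expect $\langle\star_v\alpha,\star_v\beta\rangle_{g,v}$ to be proportional to $\langle\alpha,\beta\rangle_{g,v}$ by a positive factor such as $1+\norm{v}^2_g$. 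If it is not exactly so, I would instead prove the second splitting directly, repeating the argument below with $\Delta_{t,v}$ in place of $\Delta_{n,v}$.

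\emph{Orthogonality.} Take $h\in\mathcal{H}^k_{n,v}$ and $\delta_v\gamma\in\mathcal{H}^k_{v,\on{co}}$. Green's formula gives
\begin{align}
(h,\delta_v\gamma)_v=(dh,\gamma)_v-\int_{\partial M}h\wedge\star T_v\gamma .
\end{align}
The first term vanishes since $dh=0$. The boundary integrand only involves $j^*h$, which is $0$ because $h$ is normal. Hence the two subspaces are orthogonal.

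\emph{Spanning.} Let $\lambda\in\mathcal{H}^k_v$, and let $h$ be its $(\cdot,\cdot)_v$-orthogonal projection onto the finite-dimensional space $\mathcal{H}^k_{n,v}$. It suffices to show that $\eta:=\lambda-h$, which is again a $v$-harmonic field and is orthogonal to $\mathcal{H}^k_{n,v}$, is $v$-coexact. By Proposition~\ref{prop:uniquePotential} there is $\omega\in H^2\bar\Omega^k_{n,v}$ with $\Delta_v\omega=\eta$, so
\begin{align}
\eta=d\delta_v\omega+\delta_v d\omega .
\end{align}
Set $\alpha=d\delta_v\omega$ and $\beta=\delta_v d\omega$. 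We will show $\alpha=0$, which gives $\eta=\delta_v(d\omega)$ as required.

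The form $\alpha$ is exact with a normal potential, because $j^*\delta_v\omega=0$. Green's formula and the normal boundary conditions then give
\begin{align}
\norm{\alpha}_v^2=(\alpha,\eta-\beta)_v .
\end{align}
First, $(\alpha,\eta)_v=(\delta_v\omega,\delta_v\eta)_v=0$ since $\eta$ is $v$-coclosed; the boundary term drops because $j^*\delta_v\omega=0$. Second, $(d\delta_v\omega,\delta_v d\omega)_v=(\delta_v\omega,\delta_v\delta_v d\omega)_v=0$, again with the boundary term killed by $j^*\delta_v\omega=0$. So $\alpha=0$.

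Smoothness of $\omega$ comes from Corollary~\ref{cor:harmonicfieldissmooth}, since $\eta$ is smooth. Therefore $\gamma=d\omega$ is a genuine smooth potential and $\eta\in\mathcal{H}^k_{v,\on{co}}$.

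The main obstacle is the bookkeeping of boundary terms, namely checking that each one vanishes using only $j^*\omega=0$ and $j^*\delta_v\omega=0$. The key point is that the boundary integral in~\eqref{eq:greenformula.vinduced} depends only on $j^*\zeta$ for the first argument $\zeta$. A secondary point is the $H^2$ regularity of $\omega$, which is needed to apply Green's formula; this is supplied by the ellipticity theorem above.
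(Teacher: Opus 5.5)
The first splitting $\mathcal{H}^k_v=\mathcal{H}^k_{n,v}\oplus\mathcal{H}^k_{v,\on{co}}$ is handled correctly. It is the standard Schwarz argument the paper defers to: orthogonality from Green's formula, then solve $\Delta_{n,v}\omega=\lambda-h$ and kill $d\delta_v\omega$ using $j^*\delta_v\omega=0$.

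\textbf{The gap: transporting the second splitting through $\star_v$.} In the standard case this step is harmless because $\star\star=\pm1$. Here Lemma~\ref{lem:starstar} gives $\star_v\star_v=\pm(1+\norm{v}_g^2)$, and this factor is a \emph{function}. The identity $\star_v\delta_v=\pm d\star_v$ shows that $\star_v$ sends $v$-coclosed (resp.\ $v$-coexact) forms to closed (resp.\ exact) ones. The reverse half of your second bullet fails, however. For a closed $\lambda$,
\begin{align}
\delta_v(\star_v\lambda)=\pm\star_v^{-1}d\big((1+\norm{v}_g^2)\lambda\big)=\pm\star_v^{-1}\big(d\norm{v}_g^2\wedge\lambda\big),
\end{align}
and this is nonzero in general. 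For example, $1\in\mathcal{H}^0_{t,v}$, but $T_v\mu_g=(1+\norm{v}_g^2)\mu_g$. Hence $\star_v1=\mu_g$ is not $v$-coclosed once $\norm{v}_g$ is nonconstant. So $\star_v$ does not even map $\mathcal{H}^{m-k}_v$ into $\mathcal{H}^k_v$. Using $\star_v^{-1}$ instead repairs this direction but breaks the other, since $d(\star_v^{-1}\lambda)=\pm d\big((1+\norm{v}_g^2)^{-1}\big)\wedge\star_v\lambda$.

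Your third bullet fails for the same reason. The pointwise identity you anticipate, $\langle\star_v\alpha,\star_v\beta\rangle_{g,v}=(1+\norm{v}_g^2)\langle\alpha,\beta\rangle_{g,v}$, is in fact true. After integration, though, it only gives the weighted pairing $\int_M(1+\norm{v}_g^2)\langle\alpha,\beta\rangle_{g,v}\,\mu_g$. That is not a multiple of $(\alpha,\beta)_v$, so $(\cdot,\cdot)_v$-orthogonality is not preserved. Your stated test for switching to the fallback would therefore wrongly pass. The transport is valid only when $\norm{v}_g$ is constant. The paper's own assertion that $\star_v$ identifies $\mathcal{H}^k_{t,v}$ with $\mathcal{H}^{m-k}_{n,v}$ hits the same obstruction (same example, $k=0$), so it cannot serve as a shortcut either.

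\textbf{Your fallback is what is needed, and it works.}
\begin{itemize}
\item Orthogonality: for $h\in\mathcal{H}^k_{t,v}$,
\begin{align}
(d\epsilon,h)_v=(\epsilon,\delta_vh)_v+\int_{\partial M}\epsilon\wedge\star_vh=0 .
\end{align}
\item Spanning: let $h$ be the projection of $\lambda$ onto $\mathcal{H}^k_{t,v}$. Solve $\Delta_{t,v}\omega=\eta:=\lambda-h$ with $j^*\star_v\omega=0=j^*\star_vd\omega$; $\omega$ is smooth by tangential regularity.
\item Green's formula with second argument $d\omega$, and first argument $\eta$ resp.\ $d\delta_v\omega$, gives $(\eta,\delta_vd\omega)_v=0$ and $(d\delta_v\omega,\delta_vd\omega)_v=0$. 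Both boundary terms vanish because $j^*\star_vd\omega=0$.
\item Hence $\norm{\delta_vd\omega}_v^2=0$ and $\eta=d(\delta_v\omega)\in\mathcal{H}^k_{v,\on{ex}}$.
\end{itemize}
This mirrors your normal-case argument, with $j^*\star_vd\omega=0$ playing the role of $j^*\delta_v\omega=0$. It relies on the tangential analogues of ellipticity, finite-dimensionality and Proposition~\ref{prop:uniquePotential}, which the paper asserts without proof.
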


The following $4$-component Hodge decompositions are immediate consequences of Theorem~\ref{thm.HodgeMorrey} and~\ref{thm.Friedrichs}.
\begin{cor}[$v$-Hodge-Morrey-Friedrichs decomposition]\label{cor:4componentDecompostion}
    The space $L^2\Omega^k(M)$ can be $L^2$-orthogonal decomposed into
    \begin{align}
    L^2\Omega^k(M) &= \mathcal{E}^k(M)\oplus\mathcal{C}_v^k(M)\oplus\mathcal{H}^k_{n,v}(M)\oplus \mathcal{H}^k_{v, \on{co}}(M)\\
    &= \mathcal{E}^k(M)\oplus\mathcal{C}_v^k(M)\oplus\mathcal{H}^k_{t,v}(M)\oplus \mathcal{H}^k_{v, \on{ex}}(M)
    \end{align}
    with respect to the $v$-induced inner product~\eqref{eq:HodgeL2.v.induced}.
\end{cor}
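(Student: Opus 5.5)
The plan is to substitute the $v$-Friedrichs decomposition (Theorem~\ref{thm.Friedrichs}) into the harmonic summand of the $v$-Hodge-Morrey decomposition (Theorem~\ref{thm.HodgeMorrey}), and then check that the resulting four pieces are mutually orthogonal with respect to $(\cdot,\cdot)_v$. The harmonic summand in Theorem~\ref{thm.HodgeMorrey} is the $L^2$-closure $L^2\mathcal{H}^k_v(M)$, so I will read Theorem~\ref{thm.Friedrichs} in its $L^2$ form,
\begin{align}
L^2\mathcal{H}^k_v(M) = \mathcal{H}^k_{n,v}(M)\oplus L^2\mathcal{H}^k_{v,\on{co}}(M) = \mathcal{H}^k_{t,v}(M)\oplus L^2\mathcal{H}^k_{v,\on{ex}}(M),
\end{align}
in line with the paper's remark that the decompositions are formulated at the $L^2$ level. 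Here $\mathcal{H}^k_{n,v}(M)$ and $\mathcal{H}^k_{t,v}(M)$ need no closure, since they are finite dimensional and consist of smooth forms by Corollary~\ref{cor:harmonicfieldissmooth} and the finite-dimensionality theorem.

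The key steps are as follows.
\begin{enumerate}
\item Write any $\omega\in L^2\Omega^k(M)$ as $\omega = d\alpha + \delta_v\beta + h$ using Theorem~\ref{thm.HodgeMorrey}, with $d\alpha\in\mathcal{E}^k(M)$, $\delta_v\beta\in\mathcal{C}^k_v(M)$ and $h\in L^2\mathcal{H}^k_v(M)$.
\item Split $h = h_n + h_{co}$ by the first Friedrichs splitting. This gives the first four-component decomposition.
\item Alternatively, split $h = h_t + h_{ex}$ by the second Friedrichs splitting, which gives the second four-component decomposition.
\end{enumerate}

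For orthogonality, the first two summands are orthogonal to each other and to all of $L^2\mathcal{H}^k_v(M)$ by Theorem~\ref{thm.HodgeMorrey}. Since the last two summands in each line lie inside $L^2\mathcal{H}^k_v(M)$, they are orthogonal to $\mathcal{E}^k(M)\oplus\mathcal{C}_v^k(M)$, and they are orthogonal to each other by Theorem~\ref{thm.Friedrichs}. Directness of the sums then follows from orthogonality.

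The only real obstacle is this point about closures, i.e.\ confirming that Theorem~\ref{thm.Friedrichs} holds for $L^2$ harmonic fields and not only for smooth ones. If only the smooth version is available, I would argue by density. The orthogonal projection onto the finite-dimensional space $\mathcal{H}^k_{n,v}(M)$ is continuous, so the smooth splitting passes to the closure, with the complementary piece landing in the closure of $\mathcal{H}^k_{v,\on{co}}(M)$. For the tangential case I would either argue the same way or transport the normal case using the $v$-Hodge star $\star_v$, via $\mathcal{H}^k_{t,v}\cong\mathcal{H}^{m-k}_{n,v}$.
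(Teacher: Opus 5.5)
Your proposal is correct and is essentially the paper's argument, which simply declares the corollary an immediate consequence of inserting the $v$-Friedrichs splitting into the harmonic summand of the $v$-Hodge--Morrey decomposition; your reading of the co/ex summands as $L^2$-closures (the printed statement omits the closure) and your density argument via the continuous projection onto the finite-dimensional normal or tangential piece are what make this precise. For the tangential line, though, use that density argument rather than transport by $\star_v$: one checks $\langle\star_v\omega,\star_v\eta\rangle_{g,v}=(1+\norm{v}_g^2)\langle\omega,\eta\rangle_{g,v}$ and, for closed $\omega$, $\delta_v\star_v\omega=\pm\star_v^{-1}\bigl(d\norm{v}_g^2\wedge\omega\bigr)$, so unless $\norm{v}_g$ is constant, $\star_v$ neither preserves $(\cdot,\cdot)_v$-orthogonality nor maps $v$-harmonic fields to $v$-harmonic fields.
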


\begin{rem}
While the decompositions above are unique, the corresponding potentials are not. for instance, one has a unique $v$-Hodge-Morry decomposition
\begin{align}
    \omega = d\alpha_n + \delta_v\beta_{t,v} + h,
\end{align}
where $\alpha_n\in H^1\Omega^{k-1}_n(M)$, $\beta_{t,v}\in\Omega^{k+1}_{t,v}(M)$ and $h\in\mathcal{H}^k_v(M)$. Any form $\alpha_n + \eta$ (resp. $\beta_{t,v} + \gamma$) serves as a potential for the same component if $\eta\in\ker d\cap H^1\Omega^{k-1}_n(M)$ (resp. $\gamma\in\ker\delta_v\cap H^1\Omega^{k+1}_{t,v}(M)$). This issue can be addressed by choosing $\alpha_n$ (resp. $\beta_{t,v}$) that has the least $v$-induced $L^2$-norm among all potentials. This leads to the gauge conditions $\alpha_n\in\img\delta_v$ (resp. $\beta_{t,v}\in\img d$), which is also equivalent to requiring that $\delta_v\alpha_n = 0$ and $\alpha_n\in(H^k_{n,v})^\perp$ (resp. $d\beta_{t,v} = 0$ and $\beta_{t,v}\in(H^k_{t,v})^\perp$). The potential $\alpha_n$ (resp. $\beta_{t,v}$) can then be solved uniquely by considering the corresponding boundary value problem $\Delta_v\alpha_{n,v} = \delta_v\omega$ (resp. $\Delta_{t,v}\beta_{t,v} = d\omega$) with the gauge conditions.
\end{rem}

\begin{rem}
    One may also expect a refined $5$-component decomposition given by
    \begin{align}
        \Omega^k(M) = d\Omega^{k-1}_{n,v}\oplus\delta_v\Omega^{k+1}_{t,v}\oplus\left(\mathcal{H}^k_{n,v}+\mathcal{H}^k_{t, v}\right)\oplus\left(d\Omega^{k-1}\cap\delta_v\Omega^{k+1}\right)
    \end{align}
     as in the standard case~\cite{deturck2004poincare}. To establish such decomposition, a detailed study of the interior and boundary subspaces of the space of normal and tangential fields is needed in our $v$-induced setting. We will not discuss it further. In the standard case for compact domains in a Euclidean space, the corresponding $5$-component decomposition becomes orthogonal \cite{shonkwiler2009poincare}. It is natural to expect an analogous result in our framework.
\end{rem}


\subsection{Relative and absolute de Rham $v$-cohomology}

    
We now return to the smooth setting to discuss the de Rham cohomology groups. 
The space of $v$-harmonic fields $\mathcal{H}^k_v(M)$ is infinite dimensional, and thus does not directly provide any topological information about the underlying manifold. To recover the Hodge isomorphism, we therefore restrict $\mathcal{H}^k_v(M)$ to its finite dimensional subspaces $\mathcal{H}^k_{n,v}(M)$ and $\mathcal{H}^k_{t,v}(M)$ under normal and $v$-tangential boundary conditions. 

Note that $d$ preserves the normal boundary conditions and $\delta_v$ preserves the $v$-tangential boundary conditions. The corresponding de Rham subcomplex and dual subcomplex are therefore well defined by restricting $d$ to the space of normal forms and $\delta_v$ to the space of $v$-tangential forms, respectively, given by
\begin{align}\label{eq:deRhamsubcomplexes}
\cdots \longrightarrow \Omega_n^{k-1}(M)
\xrightarrow{\,d\,} &\Omega_n^k(M)
\xrightarrow{\,d\,} \Omega_n^{k+1}(M)
\longrightarrow \cdots, \\
\cdots \longleftarrow \Omega_{t,v}^{k-1}(M)
\xleftarrow{\,\delta_v\,} &\Omega_{t,v}^k(M)
\xleftarrow{\,\delta_v\,} \Omega_{t,v}^{k+1}(M)
\longleftarrow \cdots.
\end{align}
These then lead to the relative and induced absolute de Rham $v$-cohomology groups
\begin{align}\label{eq:cohomology}
H^k_{dR}(M, \partial M) &= \ker d^k / \img d^{k-1}\\
H^k_{dR, v}(M) &= \ker \delta_v^k / \img \delta_v^{k+1}
\end{align}
with domains restricted to the space of normal forms and the space of $v$-tangential forms. Although the vector field $v$ appears in the definition of the $v$-induced absolute de Rham cohomology $H^k_{dR, v}(M)$, we show that this group is in fact independent of the choice of $v$ and isomorphic to the usual absolute de Rham cohomology.

The relative and absolute de Rham cohomology groups~\eqref{eq:cohomology} can also be formulated by considering sequences of the entire space of forms $\Omega^k(M)$ linked by $\delta_v$ and $d$, respectively. However, these definitions are equivalent following the $v$-Hodge-Morrey-Friedrichs decompositions as in the standard de Rham-Hodge theory~\cite{schwarz2006hodge}. For our purposes, we adopt~\eqref{eq:cohomology} to define the de Rham cohomology.

In the following, we give the Hodge isomorphism theorem for manifolds with boundary in our $v$-induced setting.

\begin{thm}[$v$-Hodge isomorphism]\label{thm:hodgeisomorphism}
    Each relative de Rham cohomology class in $H^k_{dR}(M, \partial M)$ contains a unique normal $ v$-harmonic field, while each absolute $ v$-cohomology class in $H^k_{dR, v}(M)$ admits a unique tangential $v$-harmonic field. Therefore, one has the natural isomorphisms
    \begin{align}
        H^k_{dR}(M, \partial M)\cong\mathcal{H}^k_{n,v}(M)\quad\text{and}\quad H^k_{dR, v}(M)\cong\mathcal{H}^k_{t,v}(M).
    \end{align}
\end{thm}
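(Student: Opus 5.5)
I will derive both isomorphisms from the $v$-Hodge--Morrey--Friedrichs decompositions in Corollary~\ref{cor:4componentDecompostion}, used in the smooth setting via elliptic regularity (Corollary~\ref{cor:harmonicfieldissmooth}). The absolute statement will then be reduced to the relative one through the $v$-Hodge star $\star_v$.

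For the relative case, let $\omega\in\Omega^k_n(M)$ be closed. I apply the first four-component decomposition,
\[
\omega = d\alpha_n + \delta_v\beta_{t,v} + h_n + h_{co},
\]
with $h_n\in\mathcal{H}^k_{n,v}(M)$ and $h_{co}\in\mathcal{H}^k_{v,\on{co}}(M)$. The goal is to show $\delta_v\beta_{t,v}=0$ and $h_{co}=0$, so that $\omega-h_n=d\alpha_n$ with $\alpha_n$ normal.

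First, since $d\omega=0$, Green's formula~\eqref{eq:greenformula.vinduced} gives, for every $\gamma\in\Omega^{k+1}_{t,v}(M)$,
\[
(\omega,\delta_v\gamma)_v=(d\omega,\gamma)_v=0,
\]
so $\omega\perp_v\mathcal{C}^k_v(M)$. This kills the $\delta_v\beta_{t,v}$ component.

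Second, write $h_{co}=\delta_v\gamma$. Because $\omega$, $d\alpha_n$ and $h_n$ are all normal, Green's formula again shows that each is $v$-orthogonal to $\delta_v\gamma$:
\[
(\omega,\delta_v\gamma)_v=(d\omega,\gamma)_v+\text{(boundary term involving } j^*\omega)=0,
\]
and likewise for $d\alpha_n$ and $h_n$. In the same way each of these is $v$-orthogonal to $\delta_v\beta_{t,v}$. Taking the inner product of the decomposition with $h_{co}$ therefore gives $\|h_{co}\|_v^2=0$.

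Hence every class in $H^k_{dR}(M,\partial M)$ contains a normal $v$-harmonic field, namely $h_n$. For uniqueness, suppose $h\in\mathcal{H}^k_{n,v}(M)$ with $h=d\alpha$ and $\alpha$ normal. Then
\[
\|h\|_v^2=(d\alpha,h)_v=(\alpha,\delta_v h)_v=0,
\]
where the boundary term vanishes because $\alpha$ is normal. So $h=0$. The map $h\mapsto[h]$ is therefore an isomorphism $\mathcal{H}^k_{n,v}(M)\cong H^k_{dR}(M,\partial M)$.

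For the absolute case, I transport everything by $\star_v$. Since $\star_v$ is an isomorphism, Lemma~\ref{lem:starstar} and the relation $\delta_v=(-1)^k\star_v^{-1}d\star_v$ give three facts:
\begin{itemize}
\item $\star_v$ maps $\Omega^k_{t,v}(M)$ onto $\Omega^{m-k}_n(M)$;
\item it intertwines $\delta_v$ with $\pm d$;
\item it carries $\mathcal{H}^k_{t,v}(M)$ onto $\mathcal{H}^{m-k}_{n,v}(M)$, as already noted after~\eqref{eq.bc.n.subspace}.
\end{itemize}
Together these show that $\star_v$ is a chain isomorphism between the $\delta_v$-complex in~\eqref{eq:deRhamsubcomplexes} and the normal $d$-complex in complementary degree. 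Consequently
\[
H^k_{dR,v}(M)\cong H^{m-k}_{dR}(M,\partial M)\cong\mathcal{H}^{m-k}_{n,v}(M)\cong\mathcal{H}^k_{t,v}(M),
\]
and each class in $H^k_{dR,v}(M)$ contains the unique tangential $v$-harmonic field obtained by pulling back the normal one. Alternatively, the same argument as in the relative case can be run directly with the second four-component decomposition.

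I expect the main obstacle to be the $\star_v$ intertwining step. Because $\star_v^{-1}$ is not $\pm\star_v$ (cf.\ Lemma~\ref{lem:starTv}), I need to check carefully that $\star_v\,\delta_v=\pm d\,\star_v$ holds with consistent signs, and that $\star_v$ preserves the $v$-harmonic field conditions; Lemma~\ref{lem:starstar} is what I would lean on here. A secondary point is that the decompositions are stated at the $L^2$ level, so the potentials $\alpha_n$ must be shown to be smooth. I would handle this by choosing the gauge-fixed potential from the Remark and solving the elliptic problem of Proposition~\ref{prop:uniquePotential}, which is smooth by Corollary~\ref{cor:harmonicfieldissmooth}.
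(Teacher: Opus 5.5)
\paragraph{Verdict.} Your relative-case argument is correct. It differs from the paper only in using the Friedrichs refinement: the paper kills $\delta_v\beta_{t,v}$ and then reads off $j^*h=j^*\omega-d\,j^*\alpha_n=0$ directly. Your explicit uniqueness computation fills in a step the paper leaves implicit. The genuine gap is in the absolute case, specifically your third bullet.

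\paragraph{The gap.} The map $\star_v$ does \emph{not} carry $\mathcal{H}^k_{t,v}(M)$ onto $\mathcal{H}^{m-k}_{n,v}(M)$ in general. Lemma~\ref{lem:starstar}, which you planned to lean on, is exactly what breaks it. For $h\in\mathcal{H}^k_{t,v}(M)$, the form $\star_v h$ is normal and closed, since $d\star_v h=(-1)^k\star_v\delta_v h=0$. However,
\[
\delta_v\star_v h=(-1)^{m-k}\star_v^{-1}d\bigl(\star_v\star_v h\bigr)=\pm\star_v^{-1}\bigl(d\norm{v}_g^2\wedge h\bigr),
\]
which vanishes only if $d\norm{v}_g^2\wedge h=0$.

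A concrete case is $k=0$ on connected $M$. Here $\mathcal{H}^0_{t,v}(M)$ consists of the constants, and $\star_v 1=\mu_g$. But $T_v\mu_g=(1+\norm{v}_g^2)\mu_g$ forces $\mathcal{H}^m_{n,v}(M)=\mathbb{R}\,(1+\norm{v}_g^2)^{-1}\mu_g$, and $\delta_v\mu_g\neq 0$ unless $\norm{v}_g$ is constant. Pulling the normal field back, as you propose, gives $\star_v^{-1}\bigl((1+\norm{v}_g^2)^{-1}\mu_g\bigr)=(1+\norm{v}_g^2)^{-1}$. This function is not closed, so it is not a tangential $v$-harmonic field.

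So the last isomorphism in your chain, and the claim that the tangential field is ``obtained by pulling back the normal one'', fail for general $v$. The remark after \eqref{eq.bc.n.subspace} that you cite has the same defect. Your first two bullets are fine: the intertwining $\star_v\delta_v=(-1)^k d\star_v$ that worried you is just the definition of $\delta_v$. They do give $H^k_{dR,v}(M)\cong H^{m-k}_{dR}(M,\partial M)$, but that identifies cohomology groups, not harmonic representatives.

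\paragraph{The fix.} Use the route you mention only in passing, which is what the paper means by ``in the same way'': work directly with the second decomposition in Corollary~\ref{cor:4componentDecompostion}.

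For existence, let $\omega\in\Omega^k_{t,v}(M)$ with $\delta_v\omega=0$. Green's formula gives
\[
(d\alpha,\omega)_v=(\alpha,\delta_v\omega)_v+\int_{\partial M}\alpha\wedge\star_v\omega=0
\]
for \emph{every} $\alpha$, since $j^*\star_v\omega=0$. Hence $\omega$ has no $\mathcal{E}^k$ component and no $\mathcal{H}^k_{v,\on{ex}}$ component, so $\omega=\delta_v\beta_{t,v}+h_t$ with $h_t\in\mathcal{H}^k_{t,v}(M)$.

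For uniqueness, let $h\in\mathcal{H}^k_{t,v}(M)$ with $h=\delta_v\beta$ and $\beta\in\Omega^{k+1}_{t,v}(M)$. Then
\[
\norm{h}_v^2=(dh,\beta)_v-\int_{\partial M}h\wedge\star_v\beta=0.
\]

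Handle the smoothness of $\beta_{t,v}$ as you did for $\alpha_n$, now via the dual boundary value problem for $\Delta_{t,v}$. This completes the absolute case without using $\star_v$ on harmonic fields.
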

\begin{proof}
    By the $v$-Hodge-Morry decomposition~\ref{thm.HodgeMorrey}, each element $\omega\in\Omega^k(M)$ can be uniquely decomposed as follows
    \begin{align}
        \omega = d\alpha_n + \delta_v\beta_{t,v} + h,
    \end{align}
    where $\alpha_n\in\Omega^{k-1}_{n}(M)$, $\beta_{t,v}\in\Omega^{k+1}_{t,v}(M)$ and $h\in\mathcal{H}^k_v(M)$. If $\omega\in\ker d\,\big\vert_{\Omega^k_n(M)}$, then $\delta_v\beta_{t, v} = 0$ and thus $\omega = d\alpha_n + h$. In addition, $j^*\omega = 0$ implies $j^*h = 0$, i.e., $h\in\mathcal{H}^k_{n,v}(M)$ is a normal harmonic field. It follows directly that
    $H^k_{dR}(M, \partial M)\cong\mathcal{H}^k_{n,v}(M)$. The dual isomorphism $H^k_{dR, v}(M)\cong\mathcal{H}^k_{t,v}(M)$ can be shown in the same way. 
\end{proof}

We further have the following duality.
\begin{cor}\label{cor.poincaredual}
The $v$-induced Hodge star $\star_v$ induces an isomorphism
\begin{align}
    H^{k}_{dR, v}(M)\cong H^{m-k}_{dR}(M, \partial M).
\end{align}
\end{cor}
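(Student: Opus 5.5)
The plan is to show that $\star_v$ gives a chain isomorphism between the two complexes in \eqref{eq:deRhamsubcomplexes}, up to sign, and then pass to cohomology. Alternatively, one can go through the harmonic representatives: by Theorem~\ref{thm:hodgeisomorphism} we have $H^k_{dR,v}(M)\cong\mathcal{H}^k_{t,v}(M)$ and $H^{m-k}_{dR}(M,\partial M)\cong\mathcal{H}^{m-k}_{n,v}(M)$, and the text already asserts $\mathcal{H}^k_{t,v}(M)\overset{\star_v}{\cong}\mathcal{H}^{m-k}_{n,v}(M)$. I would present the chain-level argument, since it makes the naturality of the isomorphism induced by $\star_v$ transparent. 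I would then remark that it agrees with the harmonic-field picture.

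\textbf{Step 1: $\star_v$ exchanges the boundary conditions.} By definition, $\omega\in\Omega^k_{t,v}(M)$ iff $j^*(\star_v\omega)=0$, i.e.\ iff $\star_v\omega\in\Omega^{m-k}_n(M)$. Since $\star_v=\star T_v$ is a bundle isomorphism, it restricts to a linear isomorphism
\[
\star_v:\Omega^k_{t,v}(M)\to\Omega^{m-k}_n(M),
\]
with inverse $\star_v^{-1}=T_v^{-1}\star^{-1}$.

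\textbf{Step 2: intertwining.} From the definition \eqref{eq:def.delta_v} applied in degree $k+1$, we get $\star_v\delta_v^{k+1}=(-1)^{k+1}d\star_v$ on $\Omega^{k+1}(M)$. Hence $\star_v$ carries $\ker\delta_v^k\cap\Omega^k_{t,v}$ onto $\ker d^{m-k}\cap\Omega^{m-k}_n$. Similarly, it carries $\delta_v\Omega^{k+1}_{t,v}$ onto $d\,\star_v\Omega^{k+1}_{t,v}=d\,\Omega^{m-k-1}_n$. The signs are harmless because images and kernels are linear subspaces.

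The one point needing care is the degree bookkeeping. In \eqref{eq:def.delta_v} the operator $\star_v$ on the right acts on $(k+1)$-forms, and the $\star_v^{-1}$ on the left acts on $(m-k)$-forms. Read correctly, this gives exactly $d\star_v=(-1)^{k+1}\star_v\delta_v$. I would also check that this uses only the definition, not any identity of the form $\star_v\star_v=\pm1$, which fails here (cf.\ Lemma~\ref{lem:starTv}).

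\textbf{Step 3: pass to cohomology.} Since $\star_v$ maps cycles onto cycles and boundaries onto boundaries bijectively, it descends to an isomorphism
\[
H^k_{dR,v}(M)=\ker\delta_v^k/\img\delta_v^{k+1}\;\longrightarrow\;\ker d^{m-k}/\img d^{m-k-1}=H^{m-k}_{dR}(M,\partial M).
\]
Finally, I would note compatibility with Theorem~\ref{thm:hodgeisomorphism}. A tangential $v$-harmonic field $h$ (with $dh=0$ and $\delta_v h=0$) is sent to $\star_v h$, which is normal. It is closed because $d\star_v h=\pm\star_v\delta_v h=0$. It is $v$-coclosed because $\delta_v\star_v h=\pm\star_v^{-1}d\star_v\star_v h$, and Lemma~\ref{lem:starstar} should give $\star_v\star_v=\pm T_v'$ for a suitable $T_v$-type factor, reducing this to $dh=0$. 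I expect this last compatibility check, which involves the non-involutive nature of $\star_v$, to be the only delicate point. It is not needed for the isomorphism itself.
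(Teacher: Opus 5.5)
Your chain-level argument is correct, and it takes a genuinely different route from the paper's.

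The paper goes through harmonic representatives. It combines Theorem~\ref{thm:hodgeisomorphism} for both groups with the asserted isomorphism $\star_v:\mathcal{H}^k_{t,v}(M)\to\mathcal{H}^{m-k}_{n,v}(M)$. That route depends on the elliptic boundary theory and the $v$-Hodge--Morrey decomposition.

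You instead note that the definitions of $\Omega^k_{t,v}(M)$ and of $\delta_v=(-1)^k\star_v^{-1}d\star_v$ make $\star_v$, up to sign, a chain isomorphism from $(\Omega^{\bullet}_{t,v}(M),\delta_v)$ onto $(\Omega^{m-\bullet}_n(M),d)$. Your degree bookkeeping in Step~2 is right. The argument is purely algebraic, needs no analysis, and shows the isomorphism is literally induced by $\star_v$.

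Your route is also the more robust one, and this is exactly where your closing compatibility remark fails. Lemma~\ref{lem:starstar} gives $\star_v\star_v=\pm(1+\norm{v}_g^2)$, which is a function, not a constant. So for $h\in\mathcal{H}^k_{t,v}(M)$ you get $\delta_v\star_v h=\pm\star_v^{-1}\bigl(d\norm{v}_g^2\wedge h\bigr)$, which need not vanish. For instance, on $M=[0,1]$ with $v=x\,\partial_x$ one has $\delta_v(f\,dx)=-\bigl((1+x^2)f\bigr)'$. Then $h=1\in\mathcal{H}^0_{t,v}(M)$, but $\star_v h=dx\notin\mathcal{H}^1_{n,v}(M)$, since $\delta_v dx=-2x$.

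So $\star_v$ does not in general map tangential $v$-harmonic fields to normal ones. This undercuts the paper's one-line argument, and the harmonic-field alternative you mention at the start, but not your chain-level proof. The form $\star_v h$ is a closed normal form representing the image of $[h]$. It is the normal $v$-harmonic representative only when $d\norm{v}_g^2\wedge h=0$, e.g.\ when $\norm{v}_g$ is constant. Compatibility therefore holds for classes, not for representatives. Drop or correct that remark; the isomorphism itself is unaffected.
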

\begin{proof}
    Recall that the $v$-induced Hodge star $\star_v$ provides an isomorphism from the space of $v$-tangential harmonic forms $\mathcal{H}^k_{t, v}(M)$ to the space of $v$-normal harmonic forms $\mathcal{H}^{m-k}_{n, v}(M)$. The statement follows directly from Theorem~\ref{thm:hodgeisomorphism}.
\end{proof}

Note that the relative de Rham cohomology group $H^{k}_{dR}(M, \partial M)$ is defined independently of the vector field $v\in\mathcal{X}(M)$. By Corollary~\ref{cor.poincaredual}, together with its analogue in the standard case, the de Rham absolute $v$-cohomology group $H^{k}_{dR, v}(M)$ is independent of the choice of $v$, and is isomorphic to the usual de Rham absolute cohomology group. Furthermore, by the de Rham theorem, the dimensions of the space of tangential $v$-harmonic fields $\mathcal{H}^k_{t,v}(M) = \ker\Delta^k_{t,v}$ and the space of normal $v$-harmonic fields $\mathcal{H}^k_{n,v}(M) = \ker\Delta^k_{n,v}$ are given by the $k$-th Betti number $\beta_k$ and the $(m\!-\!k)$-th Betti number $\beta_{m-k}$, respectively. 






    

    

    


\section{Some remarks on the $v$-induced framework}

    

In this section, we discuss several features of the $v$-induced framework that may be of interest. We begin by comparing the $0$-th $v$-induced Hodge Laplacian with the corresponding $0$-th Hodge Laplacian (i.e., the Laplace-Beltrami operator) in the standard Hodge theory, and derive several simplified formulations under additional assumptions on $v$. Similar formulas can be obtained for higher-order forms, but they become quite involved so we do not pursue them here. We also present several explicit examples on $S^1$, $S^2$ and $[0,1]^2$ of $\Delta_v$ on functions, illustrating the spectral behavior of the induced Laplacian on functions. Next, we establish a condition under which a harmonic field $v$ remains $v$-harmonic in its own induced framework. Finally, we show that the $v$-induced framework is invariant under isometries.


\subsection{$0$-th order $v$-induced Hodge Laplacian}

The $0$-th order $v$-induced Hodge Laplacian and the standard Hodge Laplacian are given by $\Delta_v = \delta_vd$ and $\Delta = \delta d$ acting on $0$-forms (i.e., functions), respectively. Let $\varphi\in C^\infty(M)$. Notice that $\delta(\phi\omega) = \phi\delta\omega - i_{\nabla \phi}\omega$ and $T_v(\omega) = \omega + \omega(v)v^\flat$ for $\phi\in C^{\infty}(M)$ and $\omega\in\Omega^1(M)$. We have
\begin{align}
    \Delta_v\varphi = \delta_vd\varphi = \delta T_vd\varphi = \delta(d\varphi + v(\varphi)v^\flat) &= \Delta \varphi + v(\varphi)\delta v^\flat - i_{\nabla(v(\varphi))}v^\flat\\ \label{eq:0thLaplacian1}
    &= \Delta \varphi + v(\varphi)\delta v^\flat - v(v(\varphi)).
\end{align}
This formula can be further expanded in terms of the Hessian. Recall that the Hessian of a function $\phi$ is
\begin{align}\label{eq:hessian}
    \nabla^2\phi(X, Y) =\langle\nabla_X\nabla\phi, Y\rangle_g = X(Y(\phi)) - (\nabla_XY)\phi,\quad X, Y\in\mathcal{X}(M)
\end{align}
where $\nabla$ denotes the Levi-Civita connection associated with the metric $g$ on $M$. Substituting the above formula into~\eqref{eq:0thLaplacian1}, we obtain
\begin{align}\label{eq:0thLaplacian2}
    \Delta_v\varphi &= \Delta \varphi + v(\varphi)\delta v^\flat - (\nabla_vv)\varphi  - \nabla^2\varphi(v,v) 
\end{align}
One can see that the difference between $\Delta_v\varphi$ and $\Delta\varphi$ contains the second-order term $\nabla^2\varphi(v,v)$, while the remaining terms are of lower order.

The formula \eqref{eq:0thLaplacian1} can be simplified under additional assumptions on the vector field $v$, for instance, when $v$ is a gradient, coclosed, harmonic or parallel in its own direction. We do not discuss all cases separately here. Instead, we consider only the following example.

In the special case that $v = \nabla f$ is a gradient, so $v^\flat = df$. Using again the Hessian formula~\eqref{eq:hessian}, we have
\begin{align}
    \Delta_v\varphi &= \Delta \varphi + v(\varphi)\Delta f - \nabla^2\varphi(\nabla f, \nabla f) - (\nabla_{\nabla f}\nabla f)\varphi\\
    &= \Delta \varphi + \langle\nabla f, \nabla\varphi\rangle_g\Delta f - \nabla^2\varphi(\nabla f, \nabla f) - \langle\nabla_{\nabla f}\nabla f, \nabla\varphi\rangle_g\\
    &= \Delta \varphi + \langle\nabla f, \nabla\varphi\rangle_g\Delta f - \nabla^2\varphi(\nabla f, \nabla f) - \nabla^2f(\nabla f, \nabla\varphi)
\end{align}
If $v^\flat$ is further coclosed, which is equivalent to $f$ being harmonic, i.e., $\Delta f = \delta df = \delta v^\flat = 0$, the formula simiplies to
\begin{align}
    \Delta_v\varphi  = \Delta \varphi - \nabla^2\varphi(\nabla f, \nabla f) - \nabla^2f(\nabla f, \nabla\varphi)
\end{align}
In particular, if $\norm{v}_g = \norm{\nabla f}_g$ is constant, then $\nabla^2f(\nabla f, X) = \frac12X(\norm{\nabla f}^2_g) =0$ for any $X\in\mathcal{X}(M)$, and thus
\begin{align}
    \Delta_v\varphi  = \Delta \varphi - \nabla^2\varphi(\nabla f, \nabla f).
\end{align}



A local coordinate expression of the $0$-th $v$-induced Hodge Laplacian is given in Section~\ref{sec:innerProd_and_operators}, which shows that it falls into the class of anisotropic Laplace-Beltrami type operators~\cite{clarenz2000anisotropic, bajaj2003anisotropic}. In the formula, $T_v^{ij} = g^{ij} + v^iv^j$ is exactly the metric~\eqref{eq:pointwise_metric_g.vinduced} induced by $T_v$ on $1$-forms in local coordinates, obtained by perturbing the metric $g$ in the direction of $v$. As a result, the induced anisotropy enhances diffusion along the direction of $v$.

We now present several explicit examples on $S^1, S^2$, and $[0,1]^2$, where the eigenmodes and eigenvalues of $\Delta_v$ can be computed explicitly, illustrating the spectral effects of the vector field $v$.

\paragraph{Example 1}
Let $M=S^1$ with the metric $g = d\theta^2$ and let $v = c\partial_{\theta}$ be a constant vector field with $\theta\in[0,2\pi)$. Then $T_v^{11} = 1 + c^2$, and the $v$-induced Laplacian on functions is
\begin{align}
    \Delta_v \varphi =-(1+ c^2)\varphi_{\theta\theta} = (1+ c^2)\Delta\varphi, \qquad \varphi\in C^{\infty}(M),
\end{align}
where $\Delta\varphi = -\varphi_{\theta\theta}$ denotes the standard Laplace-Beltrami operator. It follows immediately that the eigenmodes of $\Delta_v$ on functions agree with those of $\Delta$ given by $\varphi_k=e^{ik\theta}$, while the corresponding eigenvalues are multiplied by the factor $1+c^2$, i.e., $\lambda_{v,k}=(1+c^2)\lambda_k = (1+c^2)k^2$. Here $\lambda_k$ denotes the eigenvalue of $\Delta$ corresponding to $\varphi_k$.

\paragraph{Example 2}
For the second example, we consider $M = S^2$ with the standard metric $g = d\theta^2 + \sin^2\theta d\phi^2$, where $\theta\in[0,\pi]$ and $\phi\in[0,2\pi)$ are the polar and azimuthal coordinates, respectively. The standard Laplacian on functions is
\begin{align}
    \Delta\varphi = -\frac{1}{\sin\theta}\partial_\theta(\sin\theta \varphi_\theta) - \frac{1}{\sin^2\theta}\varphi_{\phi\phi}
\end{align}
with eigenmodes given by the spherical harmonic functions $Y^m_l$ of degree $l$ and order $m$ and eigenvalues $\lambda_l = l(l+1)$. Let $v = c\partial_\phi$. In this case, the spherical harmonic functions $Y^m_l$ remain the eigenmodes of the $v$-induced Laplacian $\Delta_v$. In fact, we have 
\begin{align}
(T_v^{ij}) = \begin{pmatrix}
        1 & 0\\[.3em]
        0 & \frac{1}{\sin^2\theta} + c^2
    \end{pmatrix}.
\end{align}
The $v$-induced Laplacian on functions is 
\begin{align}
    \Delta_v\varphi = -\frac{1}{\sin\theta}\partial_\theta(\sin\theta\varphi_\theta) - \frac{1}{\sin^2\theta}\varphi_{\phi\phi} - c^2\varphi_{\phi\phi} = \Delta\varphi - c^2\varphi_{\phi\phi}.
\end{align}
Note that $\partial_{\phi\phi}Y^m_l = -m^2Y^m_l$. It follows that
\begin{align}
\Delta_vY^m_l = (l(l+1)+c^2m^2)Y^m_l
\end{align}
Thus, $Y^m_l$ are the eigenmodes of $\Delta_v$ with eigenvalues $\lambda_{v,l} = l(l+1)+c^2m^2$.

\paragraph{Example 3}
Now let $M = [0,1]^2$ with the metric $g = dx^2 + dy^2$ and let $v = c_1\partial_x + c_2\partial_y$. Then we have
\begin{align}
    (T_v^{ij}) = \begin{pmatrix}
        1+ c_1^2 & c_1c_2\\[.3em]
        c_1c_2 & 1+ c_2^2
    \end{pmatrix}.
\end{align}
It follows that the $v$-induced Laplacian on functions is 
\begin{align}
    \Delta_v\varphi &= - (1+ c_1^2)\varphi_{xx} - 2c_1c_2\varphi_{xy} - (1+ c_2^2)\varphi_{yy}\\
    &= \Delta\varphi - \left(c_1^2\varphi_{xx} + 2c_1c_2\varphi_{xy} + c_2^2\varphi_{yy}\right),
\end{align}
where $\Delta\varphi = -\varphi_{xx} - \varphi_{yy}$ is the standard Laplace-Beltrami operator. In the case that $v = c_1\partial_x$ is tangent to the horizontal edges and normal to the vertical edges, $T_v$ preserves the normal and tangential boundary conditions. Under the normal boundary condition $\varphi\vert_{\partial M} = 0$, the eigenmodes of the $v$-induced Laplacian $\Delta_v\varphi = \Delta\varphi - c_1^2\varphi_{xx} = -(1+c_1^2)\varphi_{xx}-c_2\varphi_{yy}$ are $\varphi_{m,n} = \sin(m\pi x)\sin(n\pi y)$, which agree with those of $\Delta$. The corresponding eigenvalues are $\lambda_{v,m,n} = \pi^2((1+c_1^2)m^2 +n^2)$, while the eigenvalues of $\Delta$ are $\lambda_{m,n} = \pi^2(m^2 +n^2)$.

The $v$-induced pointwise inner product~\eqref{eq:pointwise_metric_g.vinduced} on $1$-forms naturally leads to a $v$-induced metric $g_v$ on $M$. One may then define the Levi-Civita connection associated with $g_v$, and also the corresponding Ricci tensor, to formulate the $v$-induced Weitzenb{\"o}ck or Bochner formulas. This is, however, outside the scope of this paper. We leave it for future work.

\subsection{Harmonicity to $v$-harmonicity}

The $v$-induced framework is built using the vector field $v$. It is natural to ask when a harmonic field $v$ in the standard theory remains $v$-harmonic in its own induced framework. The following proposition gives a simple condition for this.

\begin{prop}\label{prop:harmonc.vharmonic}
    If $v^\flat$ is harmonic in the standard sense, i.e., $v^{\flat}\in\mathcal{H}^1(M)$, then it is $v$-harmonic $v^{\flat}\in\mathcal{H}^1_{v}(M)$ if and only if the squared norm of $v$ does not change in the direction of $v$ itself, i.e., $v(\norm{v}_g^2)=0$. In particular, this holds when $v$ has constant pointwise norm.
\end{prop}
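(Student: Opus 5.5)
Since $v^\flat$ is harmonic in the standard sense, it is already closed, $dv^\flat=0$. Membership in $\mathcal{H}^1_v(M)$ therefore reduces to the single condition $\delta_v v^\flat = 0$. The plan is to compute $\delta_v v^\flat$ explicitly using $\delta_v = T_v^{-1}\delta T_v$, which was established in Section~\ref{sec:innerProd_and_operators}. On $0$-forms $T_v=\on{id}$, since $\iota_v f=0$ by convention, so $T_v^{-1}$ acts trivially on the output. The condition is thus equivalent to
\begin{align}
    \delta T_v(v^\flat) = 0 .
\end{align}

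Next I compute $T_v(v^\flat)$. For a $1$-form $\omega$ we have $T_v\omega = \omega + \omega(v)v^\flat$, and $v^\flat(v)=\norm{v}_g^2$. Hence
\begin{align}
    T_v(v^\flat) = (1+\norm{v}_g^2)\,v^\flat .
\end{align}
Applying the product rule $\delta(\phi\omega)=\phi\,\delta\omega - \iota_{\nabla\phi}\omega$ already used in the derivation of \eqref{eq:0thLaplacian1}, with $\phi = 1+\norm{v}_g^2$, gives
\begin{align}
    \delta T_v(v^\flat) = (1+\norm{v}_g^2)\,\delta v^\flat - \iota_{\nabla\norm{v}_g^2}v^\flat = (1+\norm{v}_g^2)\,\delta v^\flat - v(\norm{v}_g^2).
\end{align}
Here I use $\iota_{\nabla\phi}v^\flat = g(v,\nabla\phi) = v(\phi)$.

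By standard harmonicity $\delta v^\flat=0$, so $\delta_v v^\flat = -v(\norm{v}_g^2)$. Consequently $\delta_v v^\flat=0$ if and only if $v(\norm{v}_g^2)=0$, which is the claimed equivalence. If $\norm{v}_g$ is constant, its derivative in every direction vanishes, which gives the final sentence of the statement.

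The only point needing care is the bookkeeping of signs and conventions: the sign in the product rule for $\delta$ under the convention $\delta = (-1)^k\star^{-1}d\star$, and the fact that $T_v^{-1}$ on functions is the identity. Neither sign affects the conclusion, because the $\delta v^\flat$ term drops out entirely and the remaining condition is a vanishing condition. If $M$ has boundary, $\mathcal{H}^1_v(M)$ as defined carries no boundary conditions, so the argument is purely pointwise and does not change.
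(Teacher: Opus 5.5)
Your proposal is correct and follows essentially the same route as the paper: you reduce to $\delta_v v^\flat = \delta T_v v^\flat$, use $T_v v^\flat = (1+\norm{v}_g^2)v^\flat$, and apply the product rule for $\delta$ together with $\delta v^\flat = 0$. Your sign $\delta_v v^\flat = -v(\norm{v}_g^2)$ is the correct one; the paper's final display drops the minus, which does not affect the vanishing criterion.
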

\begin{proof}
    Let $v^{\flat}\in\mathcal{H}^1(M)$. Then $d\omega = \delta\omega = 0$. We compute
    \begin{align}
        T_v v^\flat = v^\flat + v^\flat\wedge\iota_vv^\flat = v^\flat + \norm{v}_g^2v^\flat.
    \end{align}
    Applying the codifferential $\delta$ to both sides yields
    \begin{align}
        \delta_v v^\flat = \delta T_v v^\flat = \delta v^\flat + \delta(\norm{v}_g^2v^\flat) = \norm{v}_g^2\delta v^\flat - \iota_{\nabla\norm{v}_g^2}v^\flat = v(\norm{v}_g^2).
    \end{align}
    The statement follows directly.
\end{proof}


\subsection{Invariance under isometry}

One may also be interested in whether two different vector fields induce the same $v$-induced framework or produce the $v$-induced Hodge Laplacians with the same spectrum. It is clear that two vector fields that differ by a sign lead to the same $T_v$ by its definition, and thus the same $v$-induced framework. A natural question is whether this happens in other situations. Since the Hodge Laplacian is an intrinsic differential operator that is invariant under isometries, one expects an analogous property in the $v$-induced setting. This is indeed the case.

\begin{prop}\label{prop:spectrumunderisometry}
    Let $\Phi: M\to M$ be an isometry and let $v_1$ and $v_2$ be two vector fields such that $v_2 = \Phi_*v_1$. Then the corresponding $v$-induced Hodge Laplacians satisfy $\Phi^*\Delta_{v_2}\omega = \Delta_{v_1}\Phi^*\omega$ for $\omega\in\Omega^k(M)$. Therefore, they share exactly the same eigenvalues, and the corresponding eigenforms are identified by the pullback $\Phi^*$.
\end{prop}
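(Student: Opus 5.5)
The plan is to reduce the identity $\Phi^*\Delta_{v_2}=\Delta_{v_1}\Phi^*$ to the corresponding intertwining relation for $T_v$. Everything else in the $v$-induced framework is already built from natural objects. Since $\Delta_v=d\delta_v+\delta_vd$ with $\delta_v=T_v^{-1}\delta T_v$, it suffices to know three facts. The pullback commutes with $d$, which holds for any smooth map. It commutes with $\delta$, because $\Phi$ is an isometry. And it satisfies
\begin{align}
    \Phi^*T_{v_2}=T_{v_1}\Phi^*\quad\text{on }\Omega^k(M).
\end{align}
Here I take $\Phi$ to be orientation preserving, or else note that $\delta=(-1)^k\star^{-1}d\star$ is unchanged if $\star$ flips sign, so orientation is irrelevant. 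Given these facts, $\Phi^*T_{v_2}^{-1}=T_{v_1}^{-1}\Phi^*$ follows at once, so $\Phi^*\delta_{v_2}=\delta_{v_1}\Phi^*$, and then $\Phi^*\Delta_{v_2}=\Delta_{v_1}\Phi^*$.

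The key step is therefore the pointwise identity for $T_v$. I would split $T_{v_2}=\id+v_2^\flat\wedge\iota_{v_2}$ into its two pieces and handle each separately.

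\emph{Musical isomorphism.} Because $\Phi^*g=g$ and $v_2=\Phi_*v_1$, we have $\Phi^*(v_2^\flat)=v_1^\flat$. To check this, evaluate on $X$ at $x$: $g_{\Phi(x)}(v_2,\Phi_*X)=g_{\Phi(x)}(\Phi_*v_1,\Phi_*X)=g_x(v_1,X)$.

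\emph{Interior product.} The naturality relation $\Phi^*(\iota_{\Phi_*v_1}\omega)=\iota_{v_1}\Phi^*\omega$ holds for any diffeomorphism. It is checked directly by inserting vectors and using that $(\Phi_*v_1)_{\Phi(x)}=d\Phi_x(v_1)_x$.

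Since pullback is a wedge-algebra homomorphism, these two facts give $\Phi^*(v_2^\flat\wedge\iota_{v_2}\omega)=v_1^\flat\wedge\iota_{v_1}\Phi^*\omega$. For commuting with $\delta$, I would use that an isometry satisfies $\Phi^*\star=\pm\star\Phi^*$, with the sign depending on orientation. The sign cancels in $\star^{-1}d\star$.

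For the spectral statement, $\Phi^*$ is invertible, with inverse $(\Phi^{-1})^*$. So $\Delta_{v_2}\omega=\lambda\omega$ holds if and only if $\Delta_{v_1}\Phi^*\omega=\lambda\Phi^*\omega$. Hence the eigenvalues coincide, with the same multiplicities, and $\Phi^*$ maps eigenspaces bijectively. One can also note that $\Phi^*$ is an isometry from $(\cdot,\cdot)_{v_2}$ to $(\cdot,\cdot)_{v_1}$: by the change of variables formula, $\langle T_{v_2}\omega,\eta\rangle_g\circ\Phi=\langle T_{v_1}\Phi^*\omega,\Phi^*\eta\rangle_g$ and $\Phi^*\mu_g=\pm\mu_g$. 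This preserves the orthogonality of eigenspaces.

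The main obstacle is bookkeeping rather than substance. One must keep track of the base points in $\Phi_*v_1$; this is well defined as a vector field because $\Phi$ is a diffeomorphism. One must also handle the orientation sign of $\star$ when $\Phi$ is orientation reversing. If $M$ has boundary, the same identities show that $\Phi^*$ maps the normal and $v_2$-tangential boundary conditions to the corresponding ones for $v_1$, since $\Phi$ preserves $\partial M$ and $\Phi^*\star_{v_2}=\pm\star_{v_1}\Phi^*$. The statement therefore extends to $\Delta_{n,v}$ and $\Delta_{t,v}$.
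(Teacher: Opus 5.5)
Your proposal is correct and follows essentially the same route as the paper: you reduce to the intertwining $\Phi^*T_{v_2}=T_{v_1}\Phi^*$ using $\Phi^*v_2^\flat=v_1^\flat$, the naturality of the interior product, and the fact that $d$ and $\delta$ commute with pullback by an isometry. Your additional remarks fill in details the paper leaves implicit: orientation-reversing isometries, $\Phi^*$ being an isometry from $(\cdot,\cdot)_{v_2}$ to $(\cdot,\cdot)_{v_1}$, and the preservation of the normal and $v$-tangential boundary conditions.
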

\begin{proof}
    Since the differential $d$, the codifferential $\delta$ commute with $\Phi^*$, it suffices to show that $\Phi^*T_{v_2}\omega = T_{v_1}\Phi^*\omega$ for $\omega\in\Omega^k(M)$. Note that $\Phi^*v_2^\flat = v_1^\flat$ and $\Phi^*\iota_{\Phi_*X}\omega = \iota_X\Phi^*\omega$ for $X\in\mathcal{X}(M)$. We obtain
    \begin{align}
        \Phi^* T_{v_2}\omega = \Phi^*\omega + \Phi^*\left(v_2^\flat\wedge\iota_{v_2}\omega \right) &= \Phi^*\omega + \Phi^*\left(v_2^\flat\right)\wedge\Phi^*\left(\iota_{v_2}\omega \right)\\
        &= \Phi^*\omega + v_1^\flat\wedge \iota_{v_1}\Phi^*\omega = T_{v_1}\Phi^*\omega,
    \end{align}
    which proves the statement.
\end{proof}
The statement of Proposition~\ref{prop:spectrumunderisometry} remains valid for the case of two different manifolds that differ by an isometry. This suggests that the spectrum of the $v$-induced Hodge Laplacian can serve as a shape descriptor for a vector field on a manifold. In particular, it is isometry-invariant, which is essential for comparing vector fields across manifolds.

\begin{appendices}

\section{Appendix}

In this appendix, we show that the operator $T_v$ preserves the normal and tangential boundary conditions only if $v$ is normal or tangential to the boundary. Note that $T_v$, however, is not linear in $v$, the result thus cannot be extended to an arbitrary vector field on $M$.

\begin{lem}\label{lem:interiorstar}
Let $\zeta\in\Omega^{k-1}(M)$. Then
    \begin{align} 
        \star(v^\flat\wedge\zeta) = (-1)^{k-1} \iota_v(\star\zeta)
    \end{align}
\end{lem}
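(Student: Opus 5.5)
The identity $\star(v^\flat\wedge\zeta) = (-1)^{k-1}\iota_v(\star\zeta)$ is a pointwise identity of $(n\!-\!k)$-forms, so I would prove it by pairing both sides against an arbitrary $\theta\in\Omega^{n-k}(M)$ and comparing. Both sides are linear in $\zeta$, and $\star$ is an isometry for $\langle\cdot,\cdot\rangle_g$, so it suffices to show
\begin{align}
    \langle \star(v^\flat\wedge\zeta), \theta\rangle_g = (-1)^{k-1}\langle \iota_v(\star\zeta), \theta\rangle_g
\end{align}
for all such $\theta$.

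The key tool is Lemma~\ref{lem.wedge.interiorprod.adjoint}, which says $\iota_v$ is the pointwise adjoint of $v^\flat\wedge$. On the right-hand side it gives
\begin{align}
    \langle \iota_v(\star\zeta), \theta\rangle_g = \langle \star\zeta, v^\flat\wedge\theta\rangle_g.
\end{align}
Next I move $\star$ across using $\langle\star\alpha,\star\beta\rangle_g = \langle\alpha,\beta\rangle_g$ together with $\star\star = (-1)^{p(n-p)}$ on $p$-forms. Writing $v^\flat\wedge\theta = \star\star^{-1}(v^\flat\wedge\theta)$ yields
\begin{align}
    \langle \star\zeta, v^\flat\wedge\theta\rangle_g = \langle \zeta, \star^{-1}(v^\flat\wedge\theta)\rangle_g .
\end{align}

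For the left-hand side, isometry and adjointness give
\begin{align}
    \langle \star(v^\flat\wedge\zeta),\theta\rangle_g = \langle v^\flat\wedge\zeta, \star^{-1}\theta\rangle_g = \langle \zeta, \iota_v\star^{-1}\theta\rangle_g .
\end{align}
The lemma is therefore equivalent to the companion identity
\begin{align}
    \iota_v\star^{-1}\theta = (-1)^{k-1}\star^{-1}(v^\flat\wedge\theta), \qquad \theta\in\Omega^{n-k}(M).
\end{align}
This is the same type of statement as the lemma, so the adjoint trick alone is circular. At this point I would switch to a direct computation instead.

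Since both sides are $C^\infty(M)$-linear in $v$ and in $\zeta$, I would work at a point in a positively oriented orthonormal coframe $e^1,\dots,e^n$ and take $v^\flat = e^1$ (after rescaling). It then suffices to take $\zeta = e^{I}$ for an increasing multi-index $I$ of length $k\!-\!1$, using $\star e^I = \pm e^{I^c}$ with the sign fixed by $e^I\wedge\star e^I = \mu_g$. There are two cases.
\begin{itemize}
    \item[(i)] If $1\in I$, then $v^\flat\wedge\zeta=0$. Also $1\notin I^c$, so $\iota_{e_1}\star\zeta=0$, and both sides vanish.
    \item[(ii)] If $1\notin I$, then $\star(e^1\wedge e^I)=\epsilon_1 e^{(1I)^c}$ and $\iota_{e_1}\star e^I = \epsilon_2\, e^{(1I)^c}$, where $e^{I^c} = e^1\wedge e^{(1I)^c}$ up to ordering.
\end{itemize}

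The main obstacle is the sign bookkeeping in case (ii): showing $\epsilon_1 = (-1)^{k-1}\epsilon_2$. I would do this by checking the defining relation $\omega\wedge\star\eta=\langle\omega,\eta\rangle\mu_g$ with $\omega=\eta=e^1\wedge e^I$. Moving $e^1$ past the $k\!-\!1$ factors of $e^I$ produces exactly the factor $(-1)^{k-1}$. Comparing this with $e^I\wedge e^1\wedge e^{(1I)^c} = e^I\wedge\star e^I$ then gives the claimed relation between $\epsilon_1$ and $\epsilon_2$.
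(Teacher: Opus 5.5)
Your final argument is correct, but it follows a different route from the paper's. After the inner-product adjoint trick turns out to be circular, you switch to an orthonormal coframe adapted to $v$ and check basis forms $\zeta=e^I$. The sign check in case (ii) does close: from $\mu_g=\epsilon_1\, e^1\wedge e^I\wedge e^{(1I)^c}=(-1)^{k-1}\epsilon_1\, e^I\wedge e^1\wedge e^{(1I)^c}$ and $\mu_g=\epsilon_2\, e^I\wedge e^1\wedge e^{(1I)^c}$ you get $\epsilon_2=(-1)^{k-1}\epsilon_1$. The cases $v_x=0$ and a negative rescaling are harmless by linearity.

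The paper stays frame-free. It fixes an arbitrary $\eta\in\Omega^k(M)$ and notes that the $(n+1)$-form $\eta\wedge\star\zeta$ vanishes. It then applies the antiderivation rule $0=\iota_v(\eta\wedge\star\zeta)=\iota_v\eta\wedge\star\zeta+(-1)^k\eta\wedge\iota_v(\star\zeta)$. Next, Lemma~\ref{lem.wedge.interiorprod.adjoint} rewrites $\iota_v\eta\wedge\star\zeta=\langle\zeta,\iota_v\eta\rangle_g\mu_g$ as $\langle v^\flat\wedge\zeta,\eta\rangle_g\mu_g=\eta\wedge\star(v^\flat\wedge\zeta)$. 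Nondegeneracy of the wedge pairing $\wedge^kT^*_xM\times\wedge^{n-k}T^*_xM\to\wedge^nT^*_xM$ finishes the proof.

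This is exactly what your first attempt lacked. Testing against $\eta\wedge(\cdot)$ rather than $\langle\cdot,\theta\rangle_g$, and using the Leibniz rule for $\iota_v$ rather than moving $\star$ across the inner product, avoids the circular reduction to the companion identity. The paper's argument is shorter, coordinate-free, and reuses the adjointness lemma it has already proved. Yours is more elementary and self-contained, needing only the defining relation of $\star$ on basis forms. The price is the tensoriality and linearity reductions plus explicit sign bookkeeping.
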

\begin{proof}
    Let $\eta\in\Omega^k(M)$. Then $\eta\wedge\star\zeta = 0$, and thus
    \begin{align}
        0 = \iota_v(\eta\wedge\star\zeta) &= \iota_v(\eta)\wedge\star\zeta + (-1)^{k}\eta\wedge \iota_v(\star\zeta)
    \end{align}
    In addition, by Lemma~\ref{lem.wedge.interiorprod.adjoint}, we obtain
    \begin{align}
        \iota_v(\eta)\wedge\star\zeta = \langle\zeta, \iota_v(\eta)\rangle_g d\mu_g = \langle v^{\flat}\wedge\zeta, \eta\rangle_g d\mu_g = \eta\wedge\star(v^{\flat}\wedge\zeta).
    \end{align}
    Therefore,
    \begin{align}
        \eta\wedge\star(v^{\flat}\wedge\zeta) = (-1)^{k-1}\eta\wedge \iota_v(\star\zeta),
    \end{align}
    and the formula is proved.
\end{proof}

\begin{lem}\label{lem:starinterior}
Let $\omega\in\Omega^k(M)$. Then
    $$\star\iota_v\omega = (-1)^{k-1}v^\flat\wedge\star\omega$$
\end{lem}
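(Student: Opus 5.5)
The plan is to reduce this identity to Lemma~\ref{lem:interiorstar} by applying it to $\star\omega$ in place of $\zeta$. Then we undo the outer Hodge star using the standard relation $\star\star = (-1)^{p(m-p)}$ on $p$-forms, where $m=\dim M$.

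\textbf{Step 1.} Set $\zeta = \star\omega \in \Omega^{m-k}(M)$. Then $v^\flat\wedge\zeta$ is an $(m-k+1)$-form, so Lemma~\ref{lem:interiorstar} applies with its degree parameter replaced by $m-k+1$. It gives
\begin{align}
\star(v^\flat\wedge\star\omega) = (-1)^{m-k}\,\iota_v(\star\star\omega) = (-1)^{m-k}(-1)^{k(m-k)}\,\iota_v\omega .
\end{align}

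\textbf{Step 2.} Apply $\star$ to both sides. The form $v^\flat\wedge\star\omega$ has degree $m-k+1$, so
\begin{align}
\star\star(v^\flat\wedge\star\omega) = (-1)^{(m-k+1)(k-1)}\, v^\flat\wedge\star\omega .
\end{align}
Combining this with Step 1 yields
\begin{align}
\star\iota_v\omega = (-1)^{m-k+k(m-k)+(m-k+1)(k-1)}\, v^\flat\wedge\star\omega .
\end{align}

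\textbf{Step 3.} The only real work is simplifying this exponent modulo $2$, and this is where a sign slip is most likely. Expand $(m-k+1)(k-1) = k(m-k) - (m-k) + k - 1$. The total exponent is then
\begin{align}
2k(m-k) + k - 1 \equiv k-1 \pmod 2,
\end{align}
which gives the claimed sign $(-1)^{k-1}$.

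\textbf{Alternative route.} Instead of Step 2, one could mimic the proof of Lemma~\ref{lem:interiorstar} directly. Pair both sides against an arbitrary $(m-k+1)$-form and use Lemma~\ref{lem.wedge.interiorprod.adjoint} together with the graded Leibniz rule for $\iota_v$. The star--star route above is shorter, however, and it only requires careful bookkeeping of the sign conventions already fixed in the paper.

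\textbf{Sanity check.} For $k=1$ the identity reads $\star(\omega(v)) = \omega(v)\mu_g = v^\flat\wedge\star\omega$. This holds because $v^\flat\wedge\star\omega = \langle v^\flat,\omega\rangle_g\,\mu_g = \omega(v)\,\mu_g$, which confirms the sign $(-1)^{0}=1$.
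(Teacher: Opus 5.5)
Your proof is correct and is the paper's argument: apply Lemma~\ref{lem:interiorstar} to $\zeta=\star\omega$, then apply $\star$ to both sides and use $\star\star=(-1)^{p(m-p)}$ on $p$-forms. Your explicit exponent computation, $2k(m-k)+k-1\equiv k-1 \pmod 2$, is accurate and supplies the sign check that the paper leaves implicit.
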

\begin{proof}
    Note that $\star\omega\in\Omega^{n-k}(M)$. By Lemma~\ref{lem:interiorstar}, we obtain
    \begin{align}
        \star(v^\flat\wedge\star\omega) = (-1)^{n-k} \iota_v(\star\star\omega).
    \end{align}
    Applying the Hodge star $\star$ on both sides, the formula follows since $\star\star =(-1)^{k(n-k)}$ on $k$-forms.
\end{proof}

\begin{lem}\label{lem:starTv}
 One has the following formula
\begin{align}
        \star T_v = (1+\norm{v}_g^2)T_v^{-1}\star.
    \end{align}
\end{lem}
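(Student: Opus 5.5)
We prove $\star T_v = (1+\norm{v}_g^2)T_v^{-1}\star$ by computing $\star T_v\omega$ for $\omega\in\Omega^k(M)$ directly, moving $v^\flat\wedge$ and $\iota_v$ past $\star$ with Lemmas~\ref{lem:interiorstar} and~\ref{lem:starinterior}. Since $T_v$ is invertible with the explicit inverse given earlier, it is equivalent (and cleaner) to prove
\begin{align}
    T_v\star T_v = (1+\norm{v}_g^2)\star \quad\text{on } \Omega^k(M).
\end{align}

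First, the commutation of $\star$ with $v^\flat\wedge\iota_v$. Apply Lemma~\ref{lem:interiorstar} with $\zeta=\iota_v\omega\in\Omega^{k-1}(M)$ to get $\star(v^\flat\wedge\iota_v\omega)=(-1)^{k-1}\iota_v\star\iota_v\omega$. Then apply Lemma~\ref{lem:starinterior} to $\star\iota_v\omega$, which gives $\star\iota_v\omega=(-1)^{k-1}v^\flat\wedge\star\omega$. The two signs cancel, so
\begin{align}
    \star(v^\flat\wedge\iota_v\omega) = \iota_v(v^\flat\wedge\star\omega).
\end{align}
Now use the graded derivation rule $\iota_v(v^\flat\wedge\alpha)=\norm{v}_g^2\alpha - v^\flat\wedge\iota_v\alpha$, which is exactly the identity already used in the proof of the inverse formula. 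This yields the key relation
\begin{align}
    \star\, v^\flat\wedge\iota_v = \left(\norm{v}_g^2 - v^\flat\wedge\iota_v\right)\star .
\end{align}
Consequently $\star T_v = \left(1+\norm{v}_g^2 - v^\flat\wedge\iota_v\right)\star$.

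It remains to identify the operator $1+\norm{v}_g^2 - v^\flat\wedge\iota_v$ with $(1+\norm{v}_g^2)T_v^{-1}$. This is immediate from the explicit formula
\begin{align}
    T_v^{-1}=\on{id}-\frac{1}{1+\norm{v}_g^2}\,v^\flat\wedge\iota_v,
\end{align}
since multiplying it by $1+\norm{v}_g^2$ gives precisely that operator. Note that $\norm{v}_g^2$ is a function, so it commutes with $\star$, $\wedge$ and $\iota_v$ pointwise; no issue arises there.

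The main obstacle is sign bookkeeping. The two lemmas are applied in degrees $k-1$ and $k$, and one must check that the factor $(-1)^{k-1}$ appears twice and cancels rather than leaving a residual $(-1)^{n}$-type sign. It is also important to note that the degree convention in Lemma~\ref{lem:interiorstar} refers to the degree of $v^\flat\wedge\zeta$, not of $\zeta$. As a consistency check, take $k=0$: both sides reduce to $(1+\norm{v}_g^2-v^\flat\wedge\iota_v)\star$, and $\iota_v f=0$ agrees with $\star T_v f=\star f$ because $v^\flat\wedge\iota_v\mu_g = \norm{v}_g^2\mu_g$.
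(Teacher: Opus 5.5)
Your proposal is correct and follows the paper's proof essentially step for step. It applies Lemma~\ref{lem:interiorstar} with $\zeta=\iota_v\omega$, then Lemma~\ref{lem:starinterior}, cancels the two $(-1)^{k-1}$ factors, expands $\iota_v(v^\flat\wedge\star\omega)=\norm{v}_g^2\star\omega-v^\flat\wedge\iota_v\star\omega$, and reads off $(1+\norm{v}_g^2)T_v^{-1}$ from the explicit inverse. The reformulation $T_v\star T_v=(1+\norm{v}_g^2)\star$ that you announce is never actually used, which is harmless.
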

\begin{proof}
    Let $\omega\in\Omega^k(M)$. Using Lemma~\ref{lem:interiorstar}, we compute
    \begin{align}
        \star T_v \omega = \star\omega + \star\left(v^\flat\wedge\iota_v\omega \right) &= \star\omega + (-1)^{k-1}\iota_v\star\iota_v\omega.
    \end{align}
    It then follows from Lemma~\ref{lem:starinterior} that
    \begin{align}
        \iota_v\star\iota_v\omega = (-1)^{k-1}\iota_v\left(v^\flat\wedge\star\omega\right) = (-1)^{k-1}\left(\norm{v}_g^2\star\omega - v^\flat\wedge(\iota_v\star\omega)\right).
    \end{align}
    Therefore,
    \begin{align}
        \star T_v \omega = (1+ \norm{v}_g^2)\left(\id - \frac{1}{1+ \norm{v}_g^2}v^\flat\wedge\iota_v\right)\star\omega = (1+ \norm{v}_g^2) T_v^{-1}\star\omega,
    \end{align}
    which proves the formula.
\end{proof}

\begin{lem}\label{lem:starstar}
let $\omega\in\Omega^k(M)$. Then
    \begin{align}
        \star_v\star_v\omega = (-1)^{k(n-k)}\left(1+\norm{v}_g^2\right)\omega
    \end{align}
\end{lem}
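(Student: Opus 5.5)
We compute $\star_v\star_v = \star T_v\star T_v$ directly, using Lemma~\ref{lem:starTv} to move $T_v$ past the Hodge star. Let $\omega\in\Omega^k(M)$. By Lemma~\ref{lem:starTv} applied to the $(n-k)$-form $\star T_v\omega$, we have
\begin{align}
    \star_v\star_v\omega = \star T_v\left(\star T_v\omega\right) = (1+\norm{v}_g^2)\,T_v^{-1}\star\star T_v\omega.
\end{align}
Here $T_v^{-1}$ denotes the inverse on $k$-forms, since $\star\star T_v\omega$ is a $k$-form.

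Next we use the standard identity $\star\star = (-1)^{k(n-k)}$ on $k$-forms. This is also the identity invoked in the proof of Lemma~\ref{lem:starinterior}. It gives
\begin{align}
    \star_v\star_v\omega = (-1)^{k(n-k)}(1+\norm{v}_g^2)\,T_v^{-1}T_v\omega = (-1)^{k(n-k)}(1+\norm{v}_g^2)\,\omega,
\end{align}
which is the claimed formula.

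One step needs care. Lemma~\ref{lem:starTv} involves the factor $1+\norm{v}_g^2$, which is a smooth positive function rather than a constant. We therefore have to check that the function multiplies correctly through the remaining operators. This holds because $\star$ and $T_v^{-1}$ are pointwise linear bundle maps, i.e.\ $C^\infty(M)$-linear, so the scalar factor commutes past them with no derivative terms. The degree bookkeeping also works: Lemma~\ref{lem:starTv} is stated for forms of arbitrary degree, and we apply it in degree $n-k$, where the $T_v$ on the right-hand side acts on $(n-k)$-forms and $T_v^{-1}$ on $k$-forms. Since $T_v$ commutes with nothing here except through this identity, this bookkeeping is the only place an error could occur; the rest of the argument is immediate.
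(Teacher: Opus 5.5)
Your proposal is correct and is essentially the paper's own argument: the paper proves this by combining $\star_v=\star T_v$ with Lemma~\ref{lem:starTv}, which is exactly what you do. You apply that lemma in degree $n-k$ and then use $\star\star=(-1)^{k(n-k)}$ on $k$-forms, and you also spell out the degree bookkeeping and the $C^\infty(M)$-linearity of the bundle maps that the paper leaves implicit.
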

\begin{proof}
    The formula follows directly from the definition of $\star_v = \star T_v$ and Lemma~\ref{lem:starTv}.
\end{proof}

\begin{prop}\label{prop:Tv.bcs}
    The operator $T_v$ preserves the normal and tangential boundary conditions if $v$ is normal or tangent to the boundary $\partial M$.
\end{prop}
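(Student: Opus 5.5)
We need to show: if $v$ is normal or tangent to $\partial M$ along the boundary, then $j^*\omega=0$ implies $j^*(T_v\omega)=0$, and $j^*(\star\omega)=0$ implies $j^*(\star T_v\omega)=0$. The plan is to reduce both statements to pullback identities for the two pieces $v^\flat$ and $\iota_v$ of $T_v=\id+v^\flat\wedge\iota_v$. For the tangential condition we additionally use Lemma~\ref{lem:starTv}, which turns it into a normal-type condition.

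\emph{Normal condition.} Since $j^*$ commutes with wedge products, we have
\begin{align}
j^*(T_v\omega)=j^*\omega+j^*(v^\flat)\wedge j^*(\iota_v\omega).
\end{align}
We split into the two cases.

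If $v$ is normal to $\partial M$, then $v^\flat$ annihilates every tangent vector to $\partial M$. Hence $j^*v^\flat=0$, and so $j^*(T_v\omega)=j^*\omega=0$.

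If $v$ is tangent to $\partial M$, write $v|_{\partial M}=j_*u$ for a vector field $u$ on $\partial M$. Pointwise we then have $j^*(\iota_v\omega)=\iota_u(j^*\omega)$. Indeed, evaluating on tangent vectors $w_1,\dots,w_{k-1}$ gives $\omega(v,w_1,\dots)$, and all arguments of this expression are tangent to the boundary. So $j^*\omega=0$ gives $j^*(\iota_v\omega)=0$, and therefore $j^*(T_v\omega)=0$.

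\emph{Tangential condition.} Assume $j^*(\star\omega)=0$. By Lemma~\ref{lem:starTv},
\begin{align}
\star T_v\omega=(1+\norm{v}_g^2)\,T_v^{-1}\star\omega.
\end{align}
The inverse $T_v^{-1}=\id-\frac{1}{1+\norm{v}_g^2}v^\flat\wedge\iota_v$ has the same structure as $T_v$, with only a smooth scalar coefficient in front of $v^\flat\wedge\iota_v$. Moreover, $j^*$ is $C^\infty$-linear. So the normal-case argument applies verbatim to $T_v^{-1}$ acting on $\star\omega$ in both subcases: $j^*v^\flat=0$ when $v$ is normal, and $j^*\iota_v=\iota_u j^*$ when $v$ is tangent. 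This gives $j^*(T_v^{-1}\star\omega)=0$, hence $j^*(\star T_v\omega)=0$.

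An alternative route for the tangential condition uses Lemma~\ref{lem:interiorstar}, which gives $\star(v^\flat\wedge\iota_v\omega)=\pm\iota_v\star\iota_v\omega$, and then Lemma~\ref{lem:starinterior}. The approach via $T_v^{-1}$ is cleaner.

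\emph{Main obstacle.} The only genuinely delicate point is the identity $j^*\iota_v=\iota_u j^*$ in the tangent case. It must be checked pointwise at the boundary: it requires $v$ to be tangent only along $\partial M$, not in a neighbourhood. We also need to confirm that in the normal case nothing needs to be said about $\iota_v\omega$.

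It is also worth noting why the hypothesis is needed. For a mixed $v=v_t+v_n$, neither term vanishes, and $T_v$ is not linear in $v$. So the two cases cannot be superposed, which is consistent with the remark preceding the appendix.
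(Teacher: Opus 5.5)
Your proof is correct and follows essentially the same route as the paper. Both treat the normal case via $j^*(T_v\omega)=j^*(v^\flat)\wedge j^*(\iota_v\omega)$ with the normal and tangent subcases, and both handle the tangential case via Lemma~\ref{lem:starTv} by rerunning the normal-case argument on the normal form $\star\omega$; the paper simply expands $(1+\norm{v}_g^2)T_v^{-1}\star\omega$ as $(1+\norm{v}_g^2)\star\omega - v^\flat\wedge(\iota_v\star\omega)$ instead of appealing to the structure of $T_v^{-1}$.
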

\begin{proof}
First, we let $\omega\in\Omega_n^k(M)$ be a normal form, i.e., $j^*(\omega) = 0$.
By computation, we have
\begin{align}
    j^*(T_v\omega) &= j^*(\omega) + j^*\left(v^\flat\wedge \iota_v\omega\right) = (j^*v^\flat)\wedge \left(j^*\iota_v\omega\right)\\
\end{align}
If $v$ is normal to $\partial M$, then $j^*v^\flat = 0$. If $v$ is tangent to $\partial M$, then
\begin{align}
    j^*(\iota_v\omega)(X_1,\ldots,X_{k-1}) &= (\iota_v\omega)(X_1,\ldots,X_{k-1})\\
    &=\omega(v, X_1,\ldots,X_{k-1}) = 0
\end{align}
holds for all $X_i\in\mathcal{X}(\partial M), i = 1,\ldots, k-1$ since $\omega$ is normal.
Both cases lead to $j^*(T_v\omega) = 0$, which means that $T_v\omega$ is normal.

Now we consider the case that $\omega\in\Omega_t^k(M)$ is tangential, i.e., $j^*(\star\omega) = 0$. By Lemma~\ref{lem:starTv}, we have
\begin{align}
    j^*(\star T_v\omega) &= (1+ \norm{v}_g^2)j^*\star\omega - j^*\left(v^\flat\wedge(\iota_v\star\omega)\right) = - j^*(v^\flat)\wedge j^*\left(\iota_v\star\omega\right).
\end{align}
Note that $\star\omega$ is a normal form. By the same reasoning for normal forms above, we immediately obtain $j^*(\star T_v\omega) = 0$ when $v$ is normal or tangential, and thus $\star T_v\omega$ is tangential. The statement is then proved. 
\end{proof}



\end{appendices}

\bibliography{refs}

\end{document}